\theoremstyle{plain}
\newtheorem{theorem}{Theorem}[section]
\newtheorem*{theorem*}{Theorem}
\newtheorem{proposition}[theorem]{Proposition}
\newtheorem*{proposition*}{Proposition}
\newtheorem{lemma}[theorem]{Lemma}
\newtheorem*{lemma*}{Lemma}
\newtheorem{corollary}[theorem]{Corollary}
\newtheorem*{corollary*}{Corollary}
\newtheorem*{conjecture*}{Conjecture}
\newtheorem*{hypothesis*}{Hypothesis}
\theoremstyle{definition}
\newtheorem{definition}[theorem]{Definition}
\newtheorem*{definition*}{Definition}
\newtheorem*{example*}{Example}
\theoremstyle{remark}
\newtheorem*{remark*}{Remark}
\newtheorem*{note*}{Note}
\newtheorem*{claim*}{Claim}
\numberwithin{equation}{section}
\newcommand{\ie}{i.e., }    
\newcommand{\etc}{etc.}     
\newcommand{\define}[1]{\emph{#1}}  
\DeclareMathOperator{\End}{End}
\DeclareMathOperator{\Ker}{Ker}
\DeclareMathOperator{\Gal}{Gal}
\DeclareMathOperator{\Tr}{Tr}
\DeclareMathOperator{\Ind}{Ind}
\DeclareMathOperator{\Res}{Res}
\DeclareMathOperator{\GL}{GL}
\DeclareMathOperator{\val}{val}
\newcommand{\ZZ}{\mathbb{Z}}    
\newcommand{\RR}{\mathbb{R}}    
\newcommand{\CC}{\mathbb{C}}    
\newcommand{\FF}{\mathbb{F}}    
\newcommand{\PP}{\mathbb{P}}    
\renewcommand{\AA}{\mathbb{A}}  
\newcommand{\fr}[1]{\mathfrak{#1}}  
\newcommand{\ca}[1]{\mathcal{#1}}   
\newcommand{\bo}[1]{\mathbf{#1}}    
\newcommand{\ro}[1]{\mathrm{#1}}    
\newcommand{\roi}{\mathfrak{o}}             
\newcommand{\p}{\mathfrak{p}}               
\newcommand{\resfld}{\boldsymbol{k}}        
\newcommand{\units}[1]{#1^{\times}}         
\newcommand{\inter}{\cap}                               
\newcommand{\normsubgp}{\lhd}                           
\newcommand{\divs}{\mid}                                
\newcommand{\suchthat}{\ \middle\vert\ }                
\newcommand{\card}[1]{\# #1}                            
\newcommand{\abs}[1]{\left\lvert #1 \right\rvert}       
\newcommand{\floor}[1]{\left\lfloor #1 \right\rfloor}   
\newcommand{\ceil}[1]{\left\lceil #1 \right\rceil}      
\renewcommand{\Bar}[1]{\overline{#1}}
\newcommand{\rquot}[2]{\left. #1 \middle/ #2 \right.}
\newcommand{\lquot}[2]{\left. #1 \middle\backslash #2 \right.}
\newcommand{\biquot}[3]{\left. #1 \middle\backslash #2 \middle/ #3 \right.}
\begin{document}

\title{Automorphic representations with prescribed ramification for unitary groups}
\author{William Conley\thanks{Partially funded by EPSRC grant EP/G001480/1}}
\date{September 2011}
\maketitle

\begin{abstract}
Let $F$ be a totally real number field, $n$ a prime integer, and $G$ a unitary group of rank $n$ defined over $F$ that is compact at every infinite place. We prove an asymptotic formula for the number of cuspidal automorphic representations of $G$ whose factors at finitely many places are prescribed up to inertia. The results and the methods used are a generalization to this setting of those used by Weinstein for $\textrm{GL}_2$. 
\end{abstract}

\section{Introduction}
\label{sec:introduction}

In the representation theory of real and $\p$-adic groups, much use is made of studying representations of a group $G$ via their restrictions to certain compact subgroups. Here we use the same strategy for an adelic group $G$, in order to count the multiplicity of representations with certain ramification behavior in the automorphic spectrum of $G$. To specify such ramification behavior precisely, we will use the theory of types for $\p$-adic groups, and will define a corresponding notion of global types for $G$. Our first goal is thus a purely local result, and the main theorem will follow as an application of this. We begin by describing the local work. 

\subsection{Local Theory}
\label{sec:localintro}

Let $F$ be a nonarchimedean local field, let $G = \GL_n(F)$, and let $\pi$ be an irreducible supercuspidal representation of $G$. The inertial equivalence class of $\pi$ then consists of all isomorphism classes of unramified twists of $\pi$. Given such an inertial equivalence class $\fr{s}$, a type for $\fr{s}$ (or an $\fr{s}$-type) is a pair $(J, \lambda)$ consisting of a compact open subgroup $J$ of $G$ and an irreducible representation $\lambda$ of $J$, such that for any irreducible representation $\pi$ of $G$, $\pi \in \fr{s}$ if and only if the restriction of $\pi$ to $J$ has a subspace isomorphic to $\lambda$. Types for all supercuspidal representations of $\GL_n(F)$ were first constructed, for the case when $n$ is prime, by Carayol in \cite{Carayol:Cuspidals}. This was later generalized to all $n$ by Bushnell and Kutzko in \cite{BK:Orange}. Although we will only deal with the former case here, we will universally use the notation and terminology of the latter, as it has become somewhat standard. 

For the global application considered here, it seems easiest to work with a fixed compact subgroup of $G$. For this reason, the types described above are inconvenient, due to the fact that the subgroup $J$ varies with $\fr{s}$. Thus we use a variant of the theory, which appears to have first been considered for $\GL_2$ in \cite{BM:Modular}, and was further studied in \cite{Paskunas}. Let $K = \GL_n(\roi_F)$, let $\fr{s}$ be an inertial equivalence class of supercuspidal representations of $G$, and let $(J, \lambda)$ be a maximal simple type for $\fr{s}$ (see \cite{BK:Orange} or section~\ref{sec:typesintro} below). By conjugating $J$ and $\lambda$ by an element of $g$ as necessary, we may assume that $J \subset K$, and we let $\tau = \Ind_J^K(\lambda)$. Then $\tau$ is irreducible (see \cite{Paskunas} for details), and thus by Frobenius reciprocity $(K, \tau)$ is an $\fr{s}$-type. The main result of \cite{Paskunas} was that, for a supercuspidal inertial equivalence class $\fr{s}$, such a representation $\tau$ is the unique $\fr{s}$-type defined on $K$. Our first goal here will be to establish, for a large class of elements $g \in K$, some bounds on $\Tr(\tau(g))$ as $\tau$ varies over all such types. This is done in Theorem~\ref{th:tracebound} below, and is the key local result that we will need to establish our main theorem. 

\subsection{Global Theory}
\label{sec:globalintro}

We now move to the global setting. For a number of reasons, we have chosen to focus here on automorphic representations defined on a certain class of unitary groups, however it should be possible to carry out a similar program for a large class of other groups. Here $F$ will denote a totally real number field and $E$ a totally imaginary quadratic extension of $F$. Let $n$ be prime as before, and let $M$ be a central simple algebra of dimension $n^2$ over $E$. Denote by $x \mapsto x^*$ an involution of the second kind of $M$, \ie an $F$-algebra anti-automorphism of $M$ of order $2$ whose restriction to $E$ (the center of $M$) is the non-trivial element of $\Gal(E/F)$. Let $G$ be the unitary group defined (over $F$) by $M$ and $^*$. Explicitly, this is given by 
\[ G(R) = \left\{ g \in M \otimes_F R \suchthat g g^* = 1 \right\} \quad \text{for every $F$-algebra $R$.} \]
In all that follows, we will fix a choice of $M$ and $^*$ for which $G(F_v)$ is compact for each infinite place $v$ of $F$. For each such $v$, we fix an isomorphism 
\[ \iota_v: G(F_v) \to \ro{U}(n) . \]
(The relevant details of the representation theory of $\ro{U}(n)$ will be reviewed in section~\ref{sec:archimedean} below.) Let $S$ be the set of places of $F$ which split in $E$. For each $v \in S$, we will fix an isomorphism 
\[ \iota_v: G(F_v) \to \GL_n(F_v) . \]
For each infinite place $v$ of $F$, we let $K_v = G(F_v)$, for each $v \in S$, we let $K_v = \iota_v^{-1}(\GL_n(\roi_{F_v}))$, and for each finite place $v \notin S$, we let $K_v$ be any fixed maximal compact subgroup of $G(F_v)$. 

Let $Z$ be the center of $G$, which is the unitary group of rank $1$ defined over $F$ using the extension $E / F$. This is given explicitly by 
\[ Z(R) = \left\{ x \in E \otimes_F R \suchthat x x^* = 1 \right\} \quad \text{for every $F$-algebra $R$.} \]
Note that $Z(F)$ is just $E^1 = \left\{ x \in E \suchthat N_{E/F}(x) = 1 \right\}$. 

Let $\AA$ be the ring of adeles of $F$, and $\AA_E$ that of $E$. Recall that there is a natural embedding of $\AA$ into $\AA_E$ and a norm map $N_{E/F}: \AA_E \to \AA$. With this notation, the adelic points of the center of $G$ are given by 
\[ Z(\AA) = \left\{ x \in \units{\AA_E} \suchthat N_{E/F}(x) = 1 \right\} . \]
Let $K$ be the subgroup of $G(\AA)$ given by 
\[ K = \prod_v K_v , \]
and let $Z_0$ be the center of $K$. Since $G$ was chosen to be compact at all the infinite places of $F$, $K$ (resp. $Z_0$) is actually a maximal compact open subgroup of $G(\AA)$ (resp. $Z(\AA)$). Note that the subgroup of rational points of $Z_0$ is just the group $\roi_E^1$ of units of norm $1$ in $\roi_E$, which is simply the finite group $\mu_E$ of roots of unity in $E$. 

For a character $\omega$ of $Z(\AA)$ that is trivial on $Z(F)$, we let $\ca{A}(\lquot{G(F)}{G(\AA)}, \omega)$ be the space of automorphic forms on $G(\AA)$ with central character $\omega$. Since $G$ is compact at each infinite place, this is simply the space of smooth complex-valued functions on $G(\AA)$ that are invariant under left translation by elements of $G(F)$, and transform by $\omega$ under left translation by elements of $Z(\AA)$. As usual, the group $G(\AA)$ acts on $\ca{A}(\lquot{G(F)}{G(\AA)}, \omega)$ by right translation, and the resulting representation decomposes as a direct sum of irreducible subrepresentations, in which each isomorphism class occurs with finite multiplicity. An irreducible representation of $G(\AA)$ occurring in $\ca{A}(\lquot{G(F)}{G(\AA)}, \omega)$ has central character $\omega$, and these representations, as $\omega$ ranges over all characters of $\lquot{Z(F)}{Z(\AA)}$, are the automorphic representations of $G(\AA)$. For an automorphic representation $\pi$ with central character $\omega$, we will write $m(\pi)$ for its multiplicity in $\ca{A}(\lquot{G(F)}{G(\AA)}, \omega)$. 

Since $\lquot{Z(F)}{Z(\AA)}$ is compact, its spectrum (the group of characters $\omega$ considered above) is discrete. Thus we may consider the space of \emph{all} automorphic forms on $G(\AA)$, which is simply a discrete direct sum: 
\[ \ca{A}(\lquot{G(F)}{G(\AA)}) = \bigoplus_{\omega} \ca{A}(\lquot{G(F)}{G(\AA)}, \omega) . \]
Note that this is simply the space of smooth functions on $G(\AA)$ that are left $G(F)$-invariant. Clearly each automorphic representation $\pi$ of $G(\AA)$ occurs in $\ca{A}(\lquot{G(F)}{G(\AA)})$ with multiplicity $m(\pi)$. 

If $\pi$ is an automorphic representation of $G(\AA)$, its central character $\omega_{\pi}$ is by definition trivial on $Z(F)$, so the restriction of $\omega_{\pi}$ to $Z_0$ must be trivial on $Z_0 \inter Z(F) = \roi_E^1$. This motivates the last point in the following definition: 

\begin{definition} \label{def:globaltype}
A \define{global type for $G$} is an irreducible representation 
$\tau = \bigotimes_v \tau_v$ of $K$ satisfying the following: 
\begin{enumerate}
    \item For each place $v$ of $F$, $\tau_v$ is an irreducible representation of $K_v$. 
    \item For all finite places $v \notin S$ and almost all $v \in S$, $\tau_v = 1$. 
    \item For all $v \in S$ for which $\tau_v$ is not $1$-dimensional, $\tau_v = \tau'_v \circ \iota_v$, where $\tau'_v$ is the type of a supercuspidal inertial equivalence class for $\GL_n(F_v)$. 
    \item If $\omega_v$ is the central character of $\tau_v$ for each place $v$, then the character $\omega_{\tau} = \prod \omega_v$ of $Z_0$ is trivial on $\roi_E^1$. 
\end{enumerate}
\end{definition}

Now let $\pi = \bigotimes' \pi_v$ be an automorphic representation of $G(\AA)$ for which $\pi_v$ is either supercuspidal or a twist of an unramified representation at each finite place $v$ of $F$, and is unramified for each $v \notin S$. For each place $v$, we can define a representation $\tau_v(\pi)$ of $K_v$, which we may call the type of $\pi$ at $v$, as follows: 
\begin{enumerate}
    \item If $v$ is an infinite place of $F$, let $\tau_v(\pi) = \pi_v$. 
    \item If $v$ is a finite place not in $S$, let $\tau_v(\pi)$ be the trivial representation of $K_v$. 
    \item If $v \in S$, regard $\pi_v$ as a representation of $\GL_n(F_v)$ via $\iota_v$, and let $\tau_v(\pi) = \tau \circ \iota_v$, where $\tau$ is the unique representation of $\GL_n(\roi_{F_v})$ that is a type for $\pi_v$. (If $\pi_v$ is a twist of an unramified representation, $\tau_v(\pi)$ will be the corresponding twist of the trivial representation of $K_v$.) 
\end{enumerate}
Now let $\tau(\pi) = \bigotimes_v \tau_v(\pi)$. Then $\tau(\pi)$ is a global type for $G$, and it is clearly the unique global type that occurs in the restriction of $\pi$ to $K$, and occurs in $\pi$ with multiplicity $1$. We will call this the global type corresponding to $\pi$, or more succinctly, the type of $\pi$. 

Since a global type $\tau$ is an irreducible representation of the compact group $K$, it admits a unitary central character $\omega_{\tau}$. By definition $\omega_{\tau}$ is a character of the group $Z_0$, trivial on the finite subgroup $\roi_E^1$. If $\pi$ is an automorphic representation of $G(\AA)$ of type $\tau$, then its central character $\omega_{\pi}$ must be an extension of $\omega_{\tau}$ from $Z_0$ to $Z(\AA)$ that is trivial on $E^1$. Note that there are a finite number of such extensions, determined by the characters of the finite group $\lquot{Z(F) Z_0}{Z(\AA)}$ (which is easily seen to be isomorphic to a subgroup of the ideal class group of $F$). 

There is an obvious notion of twisting a global type by a character of $K$, which is compatible with the twisting of automorphic representations by characters of $G(\AA)$. Specifically, let $\theta_v$ be a character of $K_v$ for each place $v$, such that $\theta_v = 1$ for almost all finite places $v$ and all $v \notin S$, and such that 
\[ \theta^n |_{\roi_E^1} = 1 , \]
where $\theta = \prod \theta_v$. Then $\theta \otimes \tau$ will be a global type as well. We will use the notation $\theta \tau$ for the twist of $\tau$ by $\theta$ so defined. 

Such a character $\theta$ of $K$ can always be extended to a unitary character $\chi = \prod \chi_v$ of $G(\AA)$, for which $\chi_v$ will be unramified for almost all $v \in S$ and all finite $v \notin S$, and for which $\chi^n |_{E^1} = 1$. Conversely, given such a character $\chi$ of $G(\AA)$, its restriction $\theta$ to $K$ will satisfy all the requirements of the previous paragraph. If $\pi$ is an automorphic representation of $G(\AA)$ of type $\tau$, then we can twist $\pi$ by the character $\chi$ to obtain an automorphic representation $\chi \pi$, and clearly it will have type $\theta \tau$. Thus for the purposes of counting automorphic representations of a given type, it will suffice to deal with global types only up to twisting. 

\subsection{Main Theorem}
\label{sec:maintheoremstatement}

We may now state our main theorem. For any global type $\tau$, let $m(\tau)$ be the multiplicity of $\tau$ in the restriction of $\ca{A}(\lquot{G(F)}{G(\AA)})$ to $K$, and let $\ca{R}(\tau)$ denote the set of distinct isomorphism classes of automorphic representations of $G(\AA)$ of type $\tau$. We intend to count the number of automorphic representations of type $\tau$ by computing $m(\tau)$. Note that, since the type $\tau$ of an automorphic representation $\pi$ always occurs in $\pi$ with multiplicity one, we can only conclude that $m(\tau) = \card{\ca{R}(\tau)}$ for all global types $\tau$ if the multiplicity one theorem holds for $G$. We will not assume this here, so we cannot conclude that $m(\tau)$ is equal to the number of distinct automorphic representations of type $\tau$. But in any case $m(\tau)$ is the sum of the multiplicities of these automorphic representations: 
\[ m(\tau) = \sum_{\pi \in \ca{R}(\tau)} m(\pi) , \]
and in general, $m(\tau) \ge \card{\ca{R}(\tau)}$. And we can certainly conclude that $\ca{R}(\tau) \neq \emptyset \iff m(\tau) > 0$. In other words, there exist automorphic representations of type $\tau$ if and only if $m(\tau)$ is not zero. 

For any global type $\tau$, let $S(\tau)$ be the set of finite places $v$ for which $\dim(\tau_v) > 1$. Note that both $\dim(\tau)$ and the set $S(\tau)$ are invariant under twisting. To deal with the infinite places, let $\fr{h}_{\RR}^*$ denote the space of weight vectors for the Lie group $\rm{U}(n)$. (See section~\ref{sec:archimedean} below for details.) For a global type $\tau$ and an infinite place $v$ of $F$, we will denote by $\lambda_v(\tau) \in \fr{h}_{\RR}^*$ the highest weight vector of $\tau_v$ (viewed as a representation of $\ro{U}(n)$ via the isomorphism $\iota_v$). The Weyl dimension formula then gives the dimension of $\tau_v$ as a polynomial function of $\lambda_v(\tau)$. We will refer to this polynomial as the Weyl polynomial of $\ro{U}(n)$. Our main theorem is now 

\begin{theorem} \label{th:globaltheorem}
There exist constants $C_1$ and $C_2$, with $C_1 > 0$, and for each infinite place $v$ of $F$ a polynomial $P_v$ on $\fr{h}_{\RR}^*$, all depending only on the group $G$, such that for all global types $\tau$, 
\[ m(\tau) \ge C_1 \dim(\tau) - C_2 \cdot n^{\card{S(\tau)}} \cdot \prod_{v \divs \infty} P_v(\lambda_v(\tau)) . \]
Each of the polynomials $P_v$ has degree strictly less than that of the Weyl polynomial of $\ro{U}(n)$. 
\end{theorem}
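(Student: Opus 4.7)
The plan is to apply the Selberg trace formula to a suitable idempotent on $K$, then estimate the geometric side place by place. Since $G$ is anisotropic over $F$, the quotient $G(F) \backslash G(\AA)$ is compact, and the simple trace formula
\[ \Tr\bigl(R(f)\bigr) = \sum_{[\gamma]} \operatorname{vol}\bigl(G_{\gamma}(F) \backslash G_{\gamma}(\AA)\bigr) \, O_{\gamma}(f) \]
holds for all $f \in C_c^{\infty}(G(\AA))$, the sum running over $G(F)$-conjugacy classes. I take $f_\tau$ to be the function supported on $K$ defined by $f_\tau(k) = \dim\tau \cdot \overline{\chi_\tau(k)}/\operatorname{vol}(K)$. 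A standard computation identifies $\pi(f_\tau)$ with the orthogonal projection of $\pi|_K$ onto its $\tau$-isotypic component, so the spectral side evaluates to $\dim\tau \cdot m(\tau)$.

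On the geometric side only finitely many conjugacy classes contribute: $\gamma$ must lie in $G(F) \cap \bigcup_{g \in G(\AA)} gKg^{-1}$, and the characteristic polynomial map, combined with discreteness of $F^n$ in $\AA^n$, forces this intersection to be finite. I separate the central contribution from the rest. A central $\gamma \in Z(F) = E^1$ can lie in $K$ only if $\gamma \in \roi_E^1 = \mu_E$, and condition (4) of Definition~\ref{def:globaltype} forces $\omega_\tau(\gamma) = 1$ on $\mu_E$, whence $\chi_\tau(\gamma) = \dim\tau$. Summing over $\mu_E$, the central contribution equals $C_1 (\dim\tau)^2$ with $C_1 = \card{\mu_E} \cdot \operatorname{vol}(G(F) \backslash G(\AA))/\operatorname{vol}(K) > 0$.

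For each non-central $[\gamma]$ I use
\[ \bigl|O_\gamma(f_\tau)\bigr| \le \frac{\dim\tau}{\operatorname{vol}(K)} \cdot V_\gamma \cdot \max_{\gamma' \in K \cap [\gamma]_{G(\AA)}} \bigl|\chi_\tau(\gamma')\bigr|, \]
where $V_\gamma = \operatorname{vol}\{g \in G_\gamma(\AA) \backslash G(\AA) \suchthat g^{-1} \gamma g \in K\}$ is finite and $\tau$-independent. I then bound $|\chi_\tau(\gamma')| = \prod_v |\chi_{\tau_v}(\gamma'_v)|$ factor by factor. At $v \in S$ with $\dim\tau_v > 1$, Theorem~\ref{th:tracebound} gives a bound of $n$, producing the factor $n^{\card{S(\tau)}}$; at all remaining finite places the factor equals $1$. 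At an infinite $v$, $\gamma_v$ is non-central in $\ro{U}(n)$ (as $\gamma \notin Z(F)$), so its stabiliser is a proper Levi $\ro{U}(n_1) \times \cdots \times \ro{U}(n_k)$ with $k \ge 2$. Applying the Weyl character formula relative to this Levi bounds $|\chi_{\tau_v}(\gamma_v)|$ by a polynomial in $\lambda_v(\tau)$ of degree $\sum_i \binom{n_i}{2} < \binom{n}{2}$, strictly less than the degree of the Weyl polynomial. Taking the maximum of these polynomials over the finite set of non-central $\gamma$ yields a single polynomial $P_v$, depending only on $G$, with the required degree bound.

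Combining, the non-central contribution has absolute value at most $C_2 \cdot \dim\tau \cdot n^{\card{S(\tau)}} \prod_{v \divs \infty} P_v(\lambda_v(\tau))$, where $C_2$ absorbs the finite sum of the quantities $\operatorname{vol}(G_\gamma(F) \backslash G_\gamma(\AA)) V_\gamma / \operatorname{vol}(K)$. Dividing the trace identity by $\dim\tau$ yields the stated inequality. I expect the principal obstacle to be the infinite-place estimate: one must organise the Weyl-character bounds uniformly in $\tau$ over the finite family of non-central $\gamma$, so that the resulting polynomial $P_v$ depends only on $G$ and still has degree strictly less than that of the Weyl polynomial. The finite-place control at $v \in S(\tau)$ is exactly what the earlier local result, Theorem~\ref{th:tracebound}, is designed to supply.
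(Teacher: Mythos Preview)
Your approach via the trace formula is essentially the same as the paper's, which instead applies Mackey's formula to $\Res_K \Ind_{G(F)}^{G(\AA)}(1)$; since $K$ is open in $G(\AA)$, the two coincide. The paper's version is slightly more elementary: it directly produces the finite sum $m(\tau) = \sum_{g \in R} \frac{1}{\#K_g} \sum_{x \in K_g} \Tr\tau(x)$ over the finite groups $K_g = G(F)^g \cap K$, so one never has to write an orbital integral or take a supremum over $G(\AA)$-conjugates---each $x$ is a fixed element to which the local bounds are applied once.

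There is, however, a genuine gap in your finite-place estimate. You assert that Theorem~\ref{th:tracebound} gives $|\chi_{\tau_v}(\gamma'_v)| \le n$ at every $v \in S(\tau)$, but that theorem only yields the bound $n$ (or $0$) when the reduction $\bar{\gamma}'_v$ modulo $\p_v$ has irreducible characteristic polynomial or at least two distinct eigenvalues in the residue field. When $\bar{\gamma}'_v$ is a scalar---which does occur at finitely many places even though $\gamma$ is globally non-central---the theorem only supplies a constant $C_{\gamma'_v}$, possibly much larger than $n$ and a priori depending on the particular $K_v$-conjugacy class of $\gamma'_v$, so your $\max_{\gamma'}$ is not obviously controlled there. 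The paper addresses exactly this point in Lemma~\ref{lem:globalbound}: since a semisimple non-central element has at least two distinct eigenvalues, the set of places where the reduction is scalar is finite and depends only on the element (not on $\tau$); the product of the bad-place constants is absorbed into a single constant $C_x$, while the bound $n$ is used at all remaining places in $S(\tau)$. You need this two-regime argument to justify the factor $n^{\#S(\tau)}$, and the Mackey formulation makes it cleaner because one only ever faces finitely many specific elements $x$ rather than a family of conjugates.
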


Both of the constants and the polynomials $P_v$ appearing in this theorem can be computed explicitly for any particular example of a group $G$, so that the theorem could yield quite explicit results. But we note that in any event, we have the following immediate corollary. 

\begin{corollary}
For all but a finite number of twist classes of global types $\tau$ of $G$, there exist automorphic representations of $G(\AA)$ of type $\tau$. 
\end{corollary}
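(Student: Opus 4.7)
My plan is to deduce the corollary as a direct application of Theorem~\ref{th:globaltheorem}: it suffices to show that the lower bound
\[
C_1 \dim(\tau) - C_2 \cdot n^{\card{S(\tau)}} \cdot \prod_{v \divs \infty} P_v(\lambda_v(\tau))
\]
is strictly positive for all but finitely many twist classes of $\tau$. Since $\dim(\tau)$ and $\card{S(\tau)}$ are manifestly twist-invariant, and the Weyl dimension formula is invariant under the central shifts by which twisting acts on archimedean highest weights, I would normalize within each twist class by choosing a representative with, say, last coordinate of $\lambda_v(\tau)$ equal to zero at each infinite place. The quantity $\prod_v P_v(\lambda_v(\tau))$ is then controlled by the norms $\|\lambda_v(\tau)\|$.

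Next, I would factor $\dim(\tau) = \bigl(\prod_{v \in S(\tau)} \dim(\tau_v)\bigr)\bigl(\prod_{v \divs \infty} W_v(\lambda_v(\tau))\bigr)$, where $W_v$ is the Weyl polynomial, and rewrite the positivity condition as
\[
\prod_{v \divs \infty} \frac{W_v(\lambda_v(\tau))}{P_v(\lambda_v(\tau))} \;\cdot\; \prod_{v \in S(\tau)} \frac{\dim(\tau_v)}{n} \;>\; \frac{C_2}{C_1}.
\]
Because $\deg P_v < \deg W_v$, the archimedean factor tends to infinity as $\|\lambda_v(\tau)\|$ grows along any ray in the normalized weight space, so the inequality is automatic once any single archimedean weight is sufficiently large. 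When all archimedean weights are bounded, the archimedean factor is bounded below by a positive constant, and the burden shifts to the finite-place factor.

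To handle that factor, I would invoke the structure of maximal simple types on $\GL_n(F_v)$ for $n$ prime: each such type has dimension $[K_v : J_v] \dim(\lambda)$ growing with both the conductor and the residue cardinality $q_v$, so that $\dim(\tau_v) \ge n$ at all but the finitely many places $v \in S$ of very small residue field, and $\dim(\tau_v) \to \infty$ as the conductor of $\tau_v$ grows. Consequently the product $\prod_{v \in S(\tau)} \dim(\tau_v)/n$ is bounded below by a positive constant depending only on $G$, and in fact diverges whenever $S(\tau)$ contains many places of large residue cardinality, or whenever some $\dim(\tau_v)$ is itself large. The only remaining configurations are those with bounded archimedean weights, bounded individual finite-place type dimensions, and $S(\tau)$ contained in a fixed finite set of places; these admit only finitely many twist classes, which we absorb into the exceptional set. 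The main obstacle is the dimension estimate $\dim(\tau_v) \ge n$ at almost all finite places: this should be extractable from the Bushnell–Kutzko parameters, but requires some case analysis depending on whether the underlying field extension is unramified or totally ramified.
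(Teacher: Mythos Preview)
Your approach is essentially the same as the paper's: both deduce the corollary directly from Theorem~\ref{th:globaltheorem} by showing that the ratio
\[
\frac{\dim(\tau)}{n^{\card{S(\tau)}}\cdot\prod_{v\mid\infty}P_v(\lambda_v(\tau))}
\]
tends to infinity as one enumerates twist classes, using the degree comparison $\deg P_v < \deg W_v$ at the infinite places and a lower bound on $\dim(\tau_v)$ at the finite places. The paper is terser --- it simply asserts that the minimal dimension of a supercuspidal type on $\GL_n(\roi_{F_v})$ is $(q_v-1)(q_v^2-1)\dotsm(q_v^{n-1}-1)$ (realized in the depth-zero case, see the proof of Theorem~\ref{th:tracebound}), which exceeds $n$ for almost all $v$, and then declares the growth of the ratio to be ``clear''. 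Your version makes explicit the normalization within twist classes and the case split between bounded and unbounded archimedean weights, which the paper leaves implicit; and the ``main obstacle'' you flag (the estimate $\dim(\tau_v)\ge n$ for almost all $v$) is exactly the input the paper supplies via that explicit minimum-dimension formula, so no further case analysis is needed.
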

\begin{proof}
For $v \in S$, the smallest possible dimension of a supercuspidal type defined on $K_v \cong \GL_n(\roi_{F_v})$ is $(q_v-1)(q_v^2-1) \dotsm (q_v^{n-1}-1)$, where $q_v$ is the cardinality of the residue field of $F_v$. (See section~\ref{sec:tracebound} below for details.) Obviously this is greater than $n$ for almost all $v$. Let $P$ denote the Weyl polynomial for $\ro{U}(n)$. Then for any infinite place $v$ of $F$, $\dim(\tau_v) = P(\lambda_v(\tau))$, so 
\[ \dim(\tau) \ge \prod_{v \in S(\tau)} \left( (q_v-1) \dotsm (q_v^{n-1}-1) \right) \cdot \prod_{v \divs \infty} P(\lambda_v(\tau)) \]
for all global types $\tau$. Thus, assuming the notation of the Theorem, if we enumerate the twist classes of global types $\tau$, it is clear that 
\[ \frac{\dim(\tau)}{n^{\card{S(\tau)}} \cdot \prod\limits_{v \divs \infty} P_v(\lambda_v(\tau))} \]
grows without bound. The result now follows. 
\end{proof}

As previously noted, our main theorem and the methods used to prove it are a direct generalization of those in \cite{Weinstein}. It is also worth noting that Shin has recently announced, in \cite{Shin}, a result along similar lines. While Shin's result applies to a much more general class of groups and automorphic representations, the asymptotic formula he derives, specialized to this case, is quite different from ours. The reason for this is that we are counting the raw number of automorphic representations of a given type, whereas his formula estimates the total dimension of certain isotypic subspaces within such representations. 

There are a number of directions in which this result could be generalized. The restriction that the global unitary group be compact at all infinite places greatly simplifies the global argument used here, but it is likely that an application of the trace formula to global types like the ones constructed here would yield similar results for a much greater class of groups. In the local setting, one obvious improvement would be to remove the restriction that $n$ be prime. Another would be to use types for $p$-adic unitary groups at the non-split places, rather than requiring that global types be trivial there (and thus restricting the result to automorphic representations that are unramified at these places). Types for the supercuspidal representations of $p$-adic unitary groups have been constructed in \cite{Stevens:Exhaustion}. Finally, it should be possible to treat more cases than just representations that are supercuspidal or twists of an unramified principal series, again using a more general class of types than the ones used here. This was done for $\GL_2$ by Weinstein, and only required treating the Steinberg representations as a special case. But for $n > 2$, trying to use local inertial types defined on a maximal compact subgroup becomes more complicated. The author has achieved some results in this area, which will appear in a forthcoming paper. 

\subsection*{Acknowledgements}

I must first and foremost thank Jared Weinstein, on whose original work this paper is based. Great thanks are also due to Don Blasius for his tireless guidance, and for looking over an earlier draft of this work. Without either of them, this paper would never have come to be. I would also like to thank Shaun Stevens for pointing out a mistake in an earlier draft of this work, and for many other helpful suggestions.

\section{Maximal simple types when $n$ is prime}
\label{sec:tametypes}

In this section and the next, we will focus on the local theory at the nonarchimedean places, so we return to the notation of section~\ref{sec:localintro}. Namely, let $F$ be a nonarchimedean local field, with ring of integers $\roi_F$, prime ideal $\p_F$, and residue field $\resfld_F = \rquot{\roi_F}{\p_F}$ of cardinality $q$. Throughout this entire section, we will assume that $n$ is prime. Though the types defined below were first constructed by Carayol in \cite{Carayol:Cuspidals}, our description of them follows \cite{BK:Orange} exactly, as does our basic notation and terminology. 

\subsection{Definition of types}
\label{sec:typesintro}

As we will occasionally make use of explicit matrix computations, we fix $V = F^n$, and we fix a basis of $V$ so that we may identify $A = \End_F(V)$ with $M_n(F)$ and $G = \units{A}$ with $\GL_n(F)$. We also fix a choice of additive character $\psi$ of $F$ of level zero. The construction of types in this setting breaks up naturally into three cases: 

\begin{description}
\item[The depth zero case] \hfill \\
Let $\tau$ be the twist by a character of $\units{\roi_F}$ of the inflation to $K$ of a cuspidal irreducible representation of $\GL_n(\resfld_F)$. Then $\tau$ is a supercuspidal type for $G$. 

\item[The unramified case] \hfill \\
Let $\fr{A} = M_n(\roi_F)$, a hereditary $\roi_F$-order in $A$, and let $\fr{P} = \p M_n(\roi_F)$, the Jacobson radical of $\fr{A}$. Let $\beta \in A \smallsetminus \fr{A}$ such that $E = F[\beta]$ is an unramified field extension of $F$ of degree $n$, such that $\units{E}$ normalizes $\fr{A}$, and such that $\beta$ is \define{minimal} over $F$ (see \cite{BK:Orange}*{1.4.14}). Let $m$ be the unique (positive) integer such that $\beta \in \fr{P}^{-m} \smallsetminus \fr{P}^{-m+1}$. Define a character $\psi_{\beta}$ of the group $1 + \fr{P}^{\ceil{\frac{m+1}{2}}}$ by 
\[ \psi_{\beta}(x) = \psi(\Tr_{A/F}(\beta(x-1))) . \]
Define groups $H^1$, $J^1$, and $J$ as follows: 
\begin{align*}
    H^1 &= (1 + \p_E) (1 + \fr{P}^{\ceil{\frac{m+1}{2}}}) ,                 \\
    J^1 &= (1 + \p_E) (1 + \fr{P}^{\floor{\frac{m+1}{2}}}) \text{, and }    \\
    J   &= \units{\roi_E} (1 + \fr{P}^{\floor{\frac{m+1}{2}}}) . 
\end{align*}
Let $\theta$ be any extension of $\psi_{\beta}$ to $H^1$. There is a unique irreducible representation $\eta$ of $J^1$ whose restriction to $H^1$ contains $\theta$. Let $\lambda$ be any extension of $\eta$ from $J^1$ to $J$. The pair $(J, \lambda)$ is now a special case of maximal simple type, in the language of \cite{BK:Orange}. Finally, let $\tau = (\chi \circ \det) \otimes \Ind_J^K(\lambda)$, for any character $\chi$ of $\units{\roi_F}$. Then $\tau$ is a supercuspidal type for $G$. 

\item[The ramified case] \hfill \\
Let $\fr{A}$ be the subring of matrices in  $M_n(\roi_F)$ that are upper triangular modulo $\p$, which is also a hereditary $\roi_F$-order in $A$. Let $\fr{P}$ again be the Jacobson radical of $\fr{A}$, which is the ideal of matrices whose reductions modulo $\p$ are nilpotent upper triangular. Much like before, let $\beta \in A \smallsetminus \fr{A}$ such that $E = F[\beta]$ is a totally ramified field extension of $F$ of degree $n$, such that $\units{E}$ normalizes $\fr{A}$, and such that $\beta$ is minimal over $F$. Define $m$, the groups $H^1$, $J^1$, and $J$, the characters $\psi_{\beta}$ and $\theta$, and the representations $\eta$, $\lambda$, and $\tau$ exactly as in the unramified case above. Once again, $(J, \lambda)$ is a special case of maximal simple type, and $\tau$ is likewise a supercuspidal type for $G$. 
\end{description}

The main result of \cite{Paskunas} is that every supercuspidal type for $G$ that is defined on $K$ is one of the representations $\tau$ described above. Furthermore, for any irreducible supercuspidal representation $\pi$ of $G$, the restriction of $\pi$ to $K$ contains one and only one such type, and that type occurs with multiplicity one in $\pi$. 

\subsection{A preliminary trace bound}
\label{sec:glaubermanlemma}

Our first task is to more carefully analyze the representation $\lambda$ in one particular case, namely when $E$ is unramified and $\lambda$ is \emph{not} $1$-dimensional (\ie when $m$ is even). The result that we derive here is probably well known to the experts, but the exact statement that we require does not seem to appear in the literature. At any rate, the details are quite technical, so we collect them here. It is likely that a very similar statement holds more generally, but the lemma below is sufficient for our needs. The proof of this lemma is very similar to others found in the literature (see for example~\cite{BH:Lifting2}*{4.1 - 4.2},~\cite{BH:Essentially1}*{4.1}), but adapted to the current setting. 

\begin{lemma} \label{lem:glauberman}
Let $\fr{A}$, $\beta$, and $m$ be as in the unramified case described above, and assume that $m$ is even. Let $H^1$, $J^1$, $J$, $\theta$, $\eta$, and $\lambda$ also be as above. Then 
\[ \abs{\Tr \lambda(a(1+x))} = 1 \]
for any $x \in \fr{P}^{\floor{\frac{m+1}{2}}}$ and any $a \in \units{\roi_E}$ whose reduction modulo $\p_E$ is not in $\units{\resfld_F}$. 
\end{lemma}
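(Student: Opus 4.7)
The approach is to decompose $a$ via Teichm\"uller lifts and, by conjugation inside $J$, reduce the computation of $\abs{\Tr\lambda(a(1+x))}$ to the absolute value of $\Tr\lambda(\zeta)$ for a single element $\zeta$, which is then evaluated using the Heisenberg--Weil character formula over a finite field.

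Since $E/F$ is unramified, $\units{\roi_E} = \mu_{q^n-1} \times (1+\p_E)$, so we write $a = \zeta u$ with $\zeta \in \mu_{q^n-1}$ and $u \in 1+\p_E \subset H^1$. The hypothesis on the reduction of $a$ translates to $\Bar\zeta \in \units{\resfld_E} \smallsetminus \units{\resfld_F}$. Setting $j \defeq u(1+x) \in J^1$, we have $a(1+x) = \zeta j$, so it suffices to bound $\abs{\Tr\lambda(\zeta j)}$ for $j \in J^1$. The quotient $V \defeq J^1/H^1$ is a symplectic $\resfld_F$-vector space under the commutator pairing $\theta([\,\cdot\,,\,\cdot\,])$, and $\eta = \lambda|_{J^1}$ is the unique Heisenberg representation of $J^1$ such that $\eta(h) = \theta(h)\cdot I$ for $h \in H^1$. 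Reducing $\fr{P}^{m/2}$ modulo $\fr{P}^{m/2+1}$ and using that $\p_E = \p_F \roi_E$, one identifies
\[
V \;\cong\; \fr{P}^{m/2}/(\fr{P}^{m/2+1} + \p_E^{m/2}) \;\cong\; M_n(\resfld_F)/\resfld_E
\]
as $\resfld_F$-vector spaces, where $\resfld_E \hookrightarrow M_n(\resfld_F)$ is the canonical embedding induced by $\roi_E \hookrightarrow \fr{A}$; conjugation by $\zeta$ on $V$ becomes conjugation by $\Bar\zeta$.

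The central geometric input is that $V^\zeta = 0$. Since $n = [\resfld_E:\resfld_F]$ is prime and $\Bar\zeta \notin \resfld_F$, we have $\resfld_F[\Bar\zeta] = \resfld_E$, so the centralizer of $\Bar\zeta$ in $M_n(\resfld_F)$ equals the centralizer of $\resfld_E$, which is $\resfld_E$ itself by the double centralizer theorem. Hence the $\zeta$-fixed subspace of $M_n(\resfld_F)/\resfld_E$ vanishes. In particular $\mathrm{Ad}(\zeta) - 1$ is invertible on $V$, so for any $j \in J^1$ we can find $j_0 \in J^1$ with $(\zeta^{-1} j_0 \zeta)\cdot j_0^{-1} \equiv j^{-1} \pmod{H^1}$; a direct expansion of $j_0(\zeta j)j_0^{-1}$ then gives $j_0(\zeta j)j_0^{-1} = \zeta h_1$ for some $h_1 \in H^1$. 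Since $\lambda(h_1) = \theta(h_1)\cdot I$ and the trace is conjugation-invariant,
\[
\Tr\lambda(\zeta j) = \Tr\lambda(\zeta h_1) = \theta(h_1)\cdot\Tr\lambda(\zeta),
\]
reducing the claim to $\abs{\Tr\lambda(\zeta)} = 1$.

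The operator $\lambda(\zeta)$ intertwines $\eta$ with $\eta\circ\mathrm{Ad}(\zeta)$ and is, up to a scalar of absolute value one, the Weil intertwiner attached to the symplectic automorphism $\mathrm{Ad}(\zeta) \in \Sp(V)$ (note that $\mathrm{Ad}(\zeta)$ preserves $\theta$ because $\zeta$ commutes with $\beta$). The classical Weil character formula over the finite field $\resfld_F$ gives $\abs{\Tr W(\sigma)}^2 = \abs{V^\sigma}$ for all $\sigma \in \Sp(V)$, so $\abs{\Tr\lambda(\zeta)}^2 = \abs{V^\zeta} = 1$. The main technical obstacle is controlling the scalar ambiguity in this identification: any two extensions of $\eta$ from $J^1$ to $J$ differ by a character of $J/J^1 \cong \units{\resfld_E}$, which is unitary, so the conclusion $\abs{\Tr\lambda(\zeta j)} = 1$ is independent of the choice of extension $\lambda$; precise matching of $\lambda(\zeta)$ with the finite-field Weil operator would be carried out in the style of the analogous finite-field calculations in \cite{BH:Lifting2} and \cite{BH:Essentially1}.
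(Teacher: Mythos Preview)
Your argument is correct and shares the paper's overall architecture: both proofs first reduce, via conjugation inside $J$, from a general element $\zeta j$ with $j\in J^1$ to an element $\zeta h$ with $h\in H^1$, using the fact that $\mathrm{Ad}(\zeta)-1$ is invertible on $V=J^1/H^1$. Your verification of $V^\zeta=0$ via the explicit identification $V\cong M_n(\resfld_F)/\resfld_E$ and the double centralizer theorem is a nice concrete alternative to the paper's more implicit treatment of the same point.

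The genuine divergence is in the endgame. The paper, after passing to the quotient by $\Ker\theta$, invokes Glauberman's correspondence for coprime operator groups: it identifies $\Bar\lambda$ as a twist of Glauberman's canonical extension of $\Bar\eta$, and then reads off $\Tr\Bar\lambda(ah)=\epsilon\,\chi(a)\,\Bar\theta(h)$ directly from Glauberman's character formula. You instead identify $\lambda(\zeta)$, up to a unitary scalar, with the Weil intertwiner attached to $\mathrm{Ad}(\zeta)\in\Sp(V)$ and appeal to the finite-field Weil character formula $\abs{\Tr W(\sigma)}^2=\abs{V^\sigma}$. Both routes are legitimate; Glauberman's theorem is a more general black box that requires no symplectic input beyond knowing the fixed points, while your Heisenberg--Weil approach is more structural for this particular situation and makes the role of the symplectic form explicit. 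The one place where your write-up is slightly loose is the sentence deferring the ``precise matching'' to the style of \cite{BH:Lifting2} and \cite{BH:Essentially1}; in fact your own Schur-lemma-plus-unitarity argument already pins down $\abs{\Tr\lambda(\zeta)}$, so no further matching is needed and that sentence could simply be dropped.
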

\begin{proof}
For convenience, let $k = \floor{\frac{m+1}{2}} = \frac{m}{2}$, so that 
\begin{align*}
    H^1 &= (1+\p_E)(1+\fr{P}^{k+1}) ,           \\
    J^1 &= (1+\p_E)(1+\fr{P}^k) , \text{ and }  \\
    J   &= \units{\roi_E}(1+\fr{P}^k). 
\end{align*}
Note that we have an exact sequence 
\begin{equation} \label{eq:splitexact}
    1 \to J^1 \to J \to \units{\resfld_E} \to 1 , 
\end{equation}
which in this case splits since $\units{\resfld_E} \cong \mu_E$, the group of roots of unity of order prime to $p$ in $E$. Thus $J = \units{\resfld_E} \ltimes J^1$, where the action of $\units{\resfld_E}$ on $J^1$ is by conjugation. 

Recall (from \cite{BK:Orange}*{3.3.1} for example) that $\theta$ is fixed under conjugation by $J$. Thus $\Ker\theta \normsubgp J$, so we let 
\[ \Bar{H^1} = \rquot{H^1}{\Ker\theta}, \quad \Bar{J^1} = \rquot{J^1}{\Ker\theta}, \quad \Bar{J} = \rquot{J}{\Ker\theta}, \]
and let $\Bar{\theta}$ (resp. $\Bar{\eta}$, $\Bar{\lambda}$) be the composition of $\theta$ (resp. $\eta$, $\lambda$) with the quotient map. Thus $\Bar{\eta}$ is the unique irreducible representation of $\Bar{J^1}$ whose restriction to $\Bar{H^1}$ contains $\Bar{\theta}$, and $\Bar{\lambda}$ is an extension of $\Bar{\eta}$ to $\Bar{J}$. 

Let $W = \rquot{\Bar{J^1}}{\Bar{H^1}} \cong \rquot{J^1}{H^1}$, and define $h_{\theta}: W \times W \to \units{\CC}$ by 
\[ (\Bar{x}, \Bar{y}) \mapsto \theta[x,y] . \]
By \cite{BK:Orange}*{3.4.1}, $h_{\theta}$ is a nondegenerate alternating bilinear form on the $\resfld_F$-vector space $W$, from which it follows that $\Bar{H^1}$ is the center of $\Bar{J^1}$ (and hence also of $\Bar{J}$). 

Although we will not need this result, note that in this setting 
\[ W \cong \rquot{(1+\fr{P}^k)}{(1+\p_E^k)(1+\fr{P}^{k+1})} \cong \rquot{\fr{P}^k}{\p_E^k + \fr{P}^{k+1}} \]
is a $\resfld_F$-vector space of dimension $n^2 - n$. Thus the representation $\lambda$ will have dimension $q^{\frac{n^2-n}{2}}$. 

The split exact sequence \eqref{eq:splitexact} reduces to 
\[ 1 \to \Bar{J^1} \to \Bar{J} \to \units{\resfld_E} \to 1 , \]
which still splits. We regard $\units{\resfld_E}$ as a group of automorphisms of $\Bar{J^1}$, acting by conjugation. Fix $a \in \units{\resfld_E} \smallsetminus \units{\resfld_F}$. Note that the commutator map $V \to V$ defined by $v \mapsto a^{-1} v a v^{-1}$ is an isomorphism. Thus if $g \in \Bar{J^1}$, we can choose $g_0 \in \Bar{J^1}$ such that $a^{-1} g_0 a g_0^{-1} = g h^{-1}$ for some $h \in \Bar{H^1}$, whence $g_0^{-1} (a g) g_0 = a h$. Thus every element of $\Bar{J}$ of the form $ag$, $g \in \Bar{J^1}$, is conjugate to an element of the form $ah$, with $h \in \Bar{H^1}$. So we will be finished if we can prove that $\abs{\Tr \Bar{\lambda}(ah)} = 1$ for all $h \in \Bar{H^1}$. 

Let $A = \langle a \rangle \subset \units{\resfld_E}$. Note that $\Bar{J^1}$ is a finite $p$-group (where $p$ is the characteristic of $\resfld_F$), so its order is relatively prime to that of $A$. Since $\Bar{\theta}$ is fixed by the action of $A$, the isomorphism class of $\Bar{\eta}$ is as well. Under these circumstances, in~\cite{Glauberman}, Glauberman gives a one-to-one correspondence between isomorphism classes of irreducible representations of $\Bar{J^1}$ fixed by $A$ and those of $\Bar{J^1}^A = \Bar{H^1}$. This correspondence maps $\Bar{\eta}$ to $\Bar{\theta}$ (by Theorem~5(d) of~\cite{Glauberman}, for example). By Theorem~2 of~\cite{Glauberman}, there exists a certain canonical extension of $\Bar{\eta}$ to $\Bar{J}$, and $\Bar{\lambda}$ is a twist of it by a uniquely determined character $\chi$ of $\units{\resfld_E}$. Thus by Theorem~3 of~\cite{Glauberman}, there exists a constant $\epsilon = \pm 1$ such that 
\[ \Tr \Bar{\lambda}(ah) = \epsilon\, \chi(a) \Bar{\theta}(h) \]
for all $h \in \Bar{H^1}$. (Note that the constant $\epsilon$ depends on $a$ and on $\eta$, but this need not concern us here.) The result now follows. 
\end{proof}

\section{A bound on the characters of types}
\label{sec:tracebound}

We now come to the first real result of this article. This is our main local result, and will provide the crucial ingredient in the proof of the main theorem. 

\begin{theorem} \label{th:tracebound}
Let $n$ be a prime integer, let $g \in K = \GL_n(\roi_F)$, and assume $g$ is not in the center of $K$. There exists a constant $C_g$ such that for all supercuspidal types $\tau$ defined on $K$, 
\[ \abs{\Tr(\tau(g))} \le C_g . \]
Let $\Bar{g} \in \GL_n(\resfld_F)$ be the reduction of $g$ modulo $\p_F$. Then if the characteristic polynomial of $\Bar{g}$ is irreducible, we may take $C_g = n$. Otherwise, if $\Bar{g}$ has at least two distinct eigenvalues, then we may take $C_g = 0$. 
\end{theorem}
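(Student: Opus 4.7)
The plan is to handle each of the three families of supercuspidal types on $K$ (depth zero, unramified positive-depth, and ramified positive-depth, as in section~\ref{sec:typesintro}) separately, reducing each trace computation to a question about $\bar g$ modulo the appropriate Jacobson radical. Twisting by $\chi \circ \det$ contributes only a unit complex factor and can be suppressed. For the induced types $\tau = \Ind_J^K \lambda$, the Frobenius character formula
\[
    \Tr \tau(g) = \sum_{Jx \in J\backslash K \,:\, xgx^{-1} \in J} \Tr \lambda(xgx^{-1})
\]
reduces the bound to (i) counting the surviving cosets, and (ii) bounding $|\Tr \lambda|$ on the resulting conjugates. The crucial structural input is the reduction of $J$ modulo the Jacobson radical $\fr{P}$: in the unramified case $J$ surjects onto the anisotropic maximal torus $T \cong \units{\resfld_E}$ of $\GL_n(\resfld_F)$, and in the ramified case (where $\resfld_E = \resfld_F$) it surjects onto the scalar diagonal copy of $\units{\resfld_F}$ in $\fr{A}/\fr{P} \cong \resfld_F^n$.

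When $\bar g$ has at least two distinct eigenvalues but reducible characteristic polynomial, I would argue the Frobenius sum is empty in both induced cases. In the unramified case, elements of $T$ have eigenvalues that are either scalar or a single Galois orbit of length $n$, so $\bar g$ cannot be conjugated into $T$. In the ramified case, lying in $\fr{A}$ forces $\bar g$ to be triangularisable over $\resfld_F$, and the scalar condition on $\fr{A}/\fr{P}$ then forces all diagonal entries to coincide, contradicting the hypothesis (either the eigenvalues do not lie in $\resfld_F$ at all, or they lie there but are not all equal). For the depth-zero case, where $\tau$ is inflated from a cuspidal representation $\sigma$ of $\GL_n(\resfld_F)$, the Deligne--Lusztig character formula gives $\Tr \sigma(\bar g) = 0$ because the semisimple part of $\bar g$ is not conjugate into any elliptic maximal torus. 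Together these give $C_g = 0$. When the characteristic polynomial of $\bar g$ is irreducible, $\bar g$ is regular semisimple elliptic; the ramified induced type again vanishes by the same triangularisability argument (now the eigenvalues lie outside $\resfld_F$), while in the depth-zero case Green's formula expresses $\Tr \sigma(\bar g)$ as a sum of $n$ unit complex numbers indexed by $\Gal(\resfld_E/\resfld_F)$, giving $|\Tr \sigma(\bar g)| \le n$.

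The remaining case --- the unramified positive-depth type when $\bar g$ is elliptic regular --- is where the real work lies. Lemma~\ref{lem:glauberman} immediately provides $|\Tr \lambda(xgx^{-1})| = 1$ on every contributing coset (its hypothesis $\bar a \notin \units{\resfld_F}$ holds because $\bar g$, being elliptic regular, is not scalar); when $m$ is odd, $\lambda$ is a one-dimensional character and the bound is automatic. The main obstacle I anticipate is bounding the number of cosets $Jx \in J\backslash K$ with $xgx^{-1} \in J$ by $n$. Modulo $\p_F$ the condition $\bar x \bar g \bar x^{-1} \in T$ singles out a single coset of $N_{\GL_n(\resfld_F)}(T)$ (the unique maximal torus of $\GL_n(\resfld_F)$ containing the regular semisimple $\bar g$ is its centralizer, itself a $\GL_n(\resfld_F)$-conjugate of $T$), giving $|W(T)| = n$ cosets of $T$. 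I would then show that each of these $n$ mod-$\p_F$ cosets lifts to at most one element of the Frobenius sum: choosing a lift $x_0$ of $\bar x$, the condition $(x_0 y) g (x_0 y)^{-1} \in J$ for $y \in K_1$ should pin $y$ down modulo $K_1 \cap J$, because conjugation by $\bar g$ has no nontrivial fixed quotients on $K_1/(K_1 \cap J)$ (the centralizer of $\bar g$ in $\GL_n(\resfld_F)$ being exactly $T$, with no ``$K_1$-part'' to absorb extra lifts). Combining the count $\le n$ with $|\Tr \lambda| \le 1$ then yields $|\Tr \tau(g)| \le n$. The existence of a generic constant $C_g$ in the residual case where $\bar g$ is scalar but $g$ is non-central follows from a finiteness argument: the $\p_F$-adic depth of $g - \zeta \cdot 1$ bounds the depth $m$ of any type $\tau$ that can detect $g$ with nonzero trace, and over the resulting finite family of $\tau$ the dimensional bound on $|\Tr \lambda|$ is uniform.
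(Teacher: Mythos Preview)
Your handling of the refined bounds $C_g = n$ and $C_g = 0$ is correct and parallels the paper, though the packaging differs. For the unramified coset count when $\bar g$ is elliptic regular, the paper builds a concrete model $X_k \subset \PP^{n-1}(\roi_E/\p_E^k)$ of $K/J$, conjugates $g$ to a companion matrix, and identifies the fixed points with roots of the characteristic polynomial of $g$ in $\roi_E/\p_E^k$; separability of $\bar p$ plus Hensel then gives at most $n$. Your Weyl-group-plus-filtration-lifting argument encodes the same Hensel step in group-theoretic language (uniqueness at each graded layer $\fr{P}^i/\fr{P}^{i+1}$ comes from the centralizer of a regular element of $T$ in $M_n(\resfld_F)$ being exactly $\resfld_E$), and feeds into Lemma~\ref{lem:glauberman} identically. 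The depth-zero and ramified cases match the paper's reasoning.

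The genuine gap is your last paragraph. The claim that the depth of $g - \zeta\cdot 1$ bounds the depth of any type with $\Tr\tau(g)\neq 0$ is false: when $\bar g$ is scalar, types of arbitrarily large $k$ can and do contribute nonzero terms to the Frobenius sum. What the paper proves instead is that the \emph{number of contributing cosets} remains bounded as $k\to\infty$. Writing $g = \alpha + \varpi^l a$ with $l$ maximal (so $a \in \fr{A} \smallsetminus \fr{P}$ and $\bar a$ is not scalar), the paper shows that for $k>l$ a point of $X_k$ is $g$-fixed if and only if its image in $X_{k-l}$ is $a$-fixed; either $a \notin \GL_n(\roi_F)$ and there are none, or $a \in \GL_n(\roi_F)$ and the already-established non-scalar lemma bounds these by $n$. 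Since the fibre size of $X_k \to X_{k-l}$ depends only on $l$, the total fixed-point count is bounded independently of $k$, and together with $\dim\lambda \le q^{(n^2-n)/2}$ this gives $C_g$. An analogous (and more intricate) depth-shift handles the ramified family. Your finiteness-of-types shortcut cannot replace this step; the fix in your framework would be to perform the same reduction $g \rightsquigarrow a$ and then invoke your coset-counting argument for $a$.
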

\begin{proof}

We prove this theorem in three cases, corresponding to the three cases in the construction of types described in section~\ref{sec:typesintro}. Since twisting clearly has no bearing on the conclusions stated here, we may ignore the twisting by characters of $\units{\roi_F}$ that occurs as the last step of each of those cases. 

\subsection{The depth zero case}

Let $\tau$ be a depth zero type. Then after twisting by a character of $\units{\roi_F}$, we may assume that $\tau$ is merely the inflation to $K$ of an irreducible cuspidal representation of $\GL_n(\resfld_F)$. Since this group is finite, the first claim is clear for this case. 

The characters of the irreducible cuspidal representations of $\GL_n(\FF_q)$ were first computed in~\cite{Green:Characters}. We briefly recall the resulting formula, in a simplified form. Let $\FF_{q^n}$ denote the finite field of $q^n$ elements. A character $\theta$ of $\units{\FF_{q^n}}$ is called \define{regular} if its orbit under the action of $\Gal(\FF_{q^n} / \FF_q)$ has exactly $n$ elements, or in other words, if $\theta, \theta^q, \dotsc, \theta^{q^{n-1}}$ are all distinct. The cuspidal representations of $\GL_n(\FF_q)$ are in one-to-one correspondence with the orbits of these characters. Note that if $\theta$ is a regular character of $\units{\FF_{q^n}}$ and $k$ is any integer such that $k, kq, \dotsc, kq^{n-1}$ are distinct modulo $q^n - 1$, then $\theta^k$ will also be regular. Furthermore, if we fix a choice of regular character $\theta$ of $\units{\FF_{q^n}}$, the map $k \mapsto \theta^k$ induces a bijection between such integers and the set of regular characters. We will thus denote the corresponding irreducible cuspidal representation of $\GL_n(\FF_q)$ by $\tau_k$. Then \cite{Green:Characters}*{p. 431} gives the following: 
\begin{itemize}
    \item If the characteristic polynomial of $\Bar{g}$ is a power of a single irreducible polynomial $f$ of degree $d$, then 
    \[ \Tr(\tau_k(\Bar{g})) = (-1)^{n-1} \left( \prod_{i=1}^{r-1} (1-q^i) \right) \left( \sum_{\gamma} \theta^k(\gamma) \right) , \]
    where the sum is taken over the $d$ distinct roots $\gamma$ of $f$ in $\FF_{q^n}$, and $r$ is the number of Jordan blocks in the Jordan normal form of $\Bar{g}$ over $\FF_q$. 
    \item Otherwise, $\Tr(\tau_k(\Bar{g})) = 0$. 
\end{itemize}
Since we are assuming $n$ is prime, the first of these cases can happen only if the characteristic polynomial of $\Bar{g}$ is either irreducible or of the form $(x - \gamma)^n$ for some $\gamma \in \units{\FF_q}$. Thus, if $\Bar{g}$ has irreducible characteristic polynomial, we get $\abs{\Tr(\tau_k(\Bar{g}))} \le n$ as desired. And otherwise, if $\Bar{g}$ has at least two distinct eigenvalues, then $\Tr(\tau_k(\Bar{g})) = 0$. Furthermore, taking $\Bar{g} = 1$, we see that the dimension of $\tau_k$, and thus of any depth zero type, is $(q-1) (q^2-1) \dotsm (q^{n-1}-1)$. 

\subsection{The unramified case}

Let $\fr{A} = M_n(\roi_F)$, and temporarily assume all of the other notation from the unramified case of section~\ref{sec:typesintro}. Recall that in this case, $E = F[\beta]$ is an unramified extension of $F$ of degree $n$, where $\beta \in M_n(F)$ is minimal over $F$. (Note also that since $\beta \in M_n(F)$, we regard $E$ as being explicitly embedded in the $F$-algebra $M_n(F)$.) To simplify notation, we let $k = \floor{\frac{m+1}{2}}$, so that $J = \units{\roi_E}(1 + \fr{P}^k)$. Since the final step in the construction of the type $\tau$ in this case was induction from $J$ to $K$, we will naturally make use of the Frobenius formula: 
\begin{equation} \label{eq:frobenius}
    \Tr \Ind_J^K(\lambda)(g) = \sum_{\substack{x \in \rquot{K}{J} \\ x^{-1}gx \in J}} \Tr \lambda(x^{-1}gx) . 
\end{equation}
Note that in this formula, the condition $x^{-1}gx \in J$ is equivalent to $gxJ = xJ$, or in other words that the coset of $x$ in $\rquot{K}{J}$ is fixed under the natural left action of $K$. Thus to apply this formula, we will begin by defining a model of the coset space $\rquot{K}{J}$ that is equipped with the same left action of $K$, then determine the points fixed by the element $g$ in this space. 

As a starting point for our model of this coset space, note that there is a natural left action of $\GL_n(F)$ on $\PP^{n-1}(E)$. (If we think of elements of projective space as column vectors in homogeneous coordinates, this action is just given by matrix multiplication.) Note that the subset of elements whose homogeneous coordinates form a basis of $E$ over $F$ is stable under this action, and the group acts transitively on this set. Similarly, we have a natural left action of $K$ on $\PP^{n-1}(\roi_E)$, and we define $X$ to be the set of all points in $\PP^{n-1}(\roi_E)$ with homogeneous coordinates 
\[ [u_0 : \dotso : u_{n-1}] \]
such that $\{ u_0, \dotsc, u_{n-1} \}$ is an $\roi_F$-basis of $\roi_E$. It is clear that $K$ acts transitively on $X$, and that $\units{\roi_E}$ is the stabilizer of some point $x \in X$. A simple computation shows that the action of the normal subgroup $1 + \fr{P}^k$ induces the equivalence relation of congruence modulo $\p_E^k$ on the coordinates $u_i$ of points in $X$. If we let $X_k$ denote the quotient of $X$ under this equivalence (which we may think of as a subset of $\PP^{n-1}(\roi_E / \p_E^k)$), and let $x_k$ denote the class of $x$, then we have a $K$-equivariant bijection 
\[ \rquot{K}{J} \to X_k \]
defined by $aJ \mapsto a \cdot x_k$. 

Note that the choice of the point $x$ will depend on the embedding of $E$ into $M_n(F)$, and hence on the element $\beta \in M_n(F)$. Thus the actual bijection established here will vary for different types $(J, \lambda)$, even for different ones having the same value of $k$. However, the action of $K$ on $X_k$ is the same in all cases, and it will turn out that this will be all that matters for our purpose: since $\lambda$ has dimension either $1$ or $q^{\frac{n^2 - n}{2}}$ (depending on whether $m$ is odd or even, respectively), $\Tr(\lambda(g))$ is bounded by the latter value, so by the Frobenius formula, $\Tr(\tau(g))$ is bounded by this value times the number of fixed points of $g$ in $X^k$. Thus the first claim of the theorem will be proved for all unramified types once we can show that the number of fixed points of $g$ in $X_k$ is bounded as $k \to \infty$. Since this has nothing to do with the choice of a fixed type, we now forget about $J$ and $\lambda$ (and $\beta$, $m$, \etc) until near the end of this section, and work only with the sets $X_k$, for \emph{all} $k > 0$. 

Note that for each $k' < k$, we get a $K$-equivariant surjection $X_k \to X_{k'}$. (In fact, these form a projective system 
\[ X_1 \gets X_2 \gets \dotsb \]
of $K$-sets, and $X = \varprojlim X_k$, but we will not need this fact.) Clearly if $g$ has a fixed point in $X_k$, then the image of this point in $X_{k'}$ must be a fixed point of $g$ as well. 

\begin{lemma*}
Assume that $\Bar{g}$ is not a scalar. If the characteristic polynomial of $\Bar{g}$ is irreducible in $\resfld_F[x]$, then $g$ has at most $n$ fixed points in $X^k$ for all $k$. Otherwise, $g$ has no fixed points in $X^k$ for all $k$. 
\end{lemma*}
\begin{proof}
Note that $X^1$ is precisely the subset of $\PP^{n-1}(\resfld_E)$ consisting of points whose homogeneous coordinates form a basis of $\resfld_E$ over $\resfld_F$. Furthermore, this set carries the natural action of $\GL_n(\resfld_F) = \rquot{K}{1 + \p M_n(\roi_F)}$, and the action of $K$ on $X^1$ factors through this quotient. Thus the fixed points of $g$ in $X^1$ are precisely the fixed points of $\Bar{g}$ in $X^1$, which are just the one-dimensional spaces of eigenvectors of $\Bar{g}$ that coincide with points in $X^1$. Let $p(x) = x^n + a_{n-1}x^{n-1} + \dotsb + a_0$ be the characteristic polynomial of $g$ in $\roi_F[x]$, and let $\Bar{p}$ be its reduction modulo $\p$. Suppose that $\Bar{p} = p_1 p_2$, with $p_1$ and $p_2$ relatively prime in $\resfld_F[x]$. Then $\Bar{g}$ is similar to a block-diagonal matrix $\left( \begin{smallmatrix} g_1 & \\ & g_2 \end{smallmatrix} \right)$ such that the characteristic polynomial of $g_i$ is $p_i$ for $i = 1,2$. Clearly such a matrix cannot have a fixed point in $X^1$. 

So if $\Bar{g}$ has a fixed point in $X^1$, $\Bar{p}$ must be a power of a single irreducible polynomial. But since $n$ is prime, this means that either $\Bar{p}$ is irreducible, or $\Bar{p}(x) = (x - \alpha)^n$ for some $\alpha \in \units{\resfld_F}$. Assume the latter. Then $\Bar{g}$ is conjugate within $\GL_n(\resfld_F)$ to a matrix of the form $\alpha + h$, where $h$ is a nilpotent upper-triangular matrix. Since we are assuming that $\Bar{g}$ is not a scalar, $h \neq 0$. Again, it is clear that such a matrix cannot have a fixed point in $X^1$. This proves the second claim of the lemma. 

Now assume that $\Bar{p}$ is irreducible. Then by elementary linear algebra, there exists a $\Bar{g}$-cyclic vector $\Bar{v} \in \resfld_F^n$, \ie a vector for which 
\[ \{ \Bar{v}, \Bar{g} \Bar{v}, \dotsc, \Bar{g}^{n-1} \Bar{v} \} \]
is a basis of $\resfld_F^n$. It is not hard to see that any lift $v$ of $\Bar{v}$ from $\resfld_F^n$ to $\roi_F^n$ must be $g$-cyclic, and such a vector yields a basis of $F^n$ consisting of vectors in $\roi_F^n$. The matrix of $g$ with respect to this basis is the companion matrix 
\[ C_p = \begin{pmatrix}
    0      &       &       &-a_0        \\
    1      &\ddots &       &-a_1        \\
           &\ddots &0      &\vdots      \\
           &       &1      &-a_{n-1}
\end{pmatrix} . \]
Thus $g$ is conjugate within $K$ to $C_p$, so for the purposes of counting fixed points, we may assume $g = C_p$. Now with $g$ in this simplified form, an easy computation shows that the fixed points of $g$ in $X_k$ correspond precisely to the roots of the polynomial $p$ in $\units{(\rquot{\roi_E}{\p_E^k})}$. If $k = 1$, there are clearly at most $n$ of these. And since $\Bar{p}$ is irreducible, they are all distinct, so Hensel's lemma implies that there are at most $n$ such roots in $\units{(\rquot{\roi_E}{\p_E^k})}$ for all $k > 1$ as well. 
\end{proof}

The last claim of the theorem, in the unramified case, follows immediately from this lemma. To prove the second claim of the theorem in this case, we return to the context of the beginning of this section of the proof: let $(J, \lambda)$ be a maximal simple type with all its associated notation, and let $\tau = \Ind_J^K(\lambda)$. Now each of the at most $n$ fixed points of $g$ in $X_k$ corresponds to an $x \in \rquot{K}{J}$ such that $x^{-1} g x \in J$. Recalling that $J = \units{\roi_E}(1 + \fr{P}^k)$, we see that the reduction modulo $\p$ of $x^{-1} g x$ will be an element of $\units{(\rquot{\roi_E}{\p_E})}$. Since its minimal polynomial is irreducible of degree $n$ (because it is a conjugate of $\Bar{g}$) it must in fact be in $\units{\resfld_E} \smallsetminus \units{\resfld_F}$. Thus, if $m$ is even, Lemma~\ref{lem:glauberman} implies that $\abs{\Tr \lambda(x^{-1} g x)} \le 1$. On the other hand, if $m$ is odd, then $\lambda$ is one-dimensional, so the same is obviously true. Thus either way, the Frobenius formula implies 
\[ \abs{\Tr \tau(g)} \le n . \]

Finally, we deal with the first claim of the theorem: that the trace of $\tau(g)$ is bounded as $\tau$ runs over all unramified types of $K$. As remarked previously, this will be proved once we show that the number of fixed points of $g$ in $X^k$ is bounded as $k \to \infty$. The only case not covered by the lemma is when $\Bar{g}$ is a scalar. For this, we choose $\alpha \in \units{\roi_F}$ and $h \in \fr{P}^l = \p^l M_n(\roi_F)$ such that $g = \alpha + h$, and such that $l$ is maximal with respect to this decomposition. Let $\varpi$ be a uniformizer of $F$, and let $a = \varpi^{-l} h$. The assumption that $l$ is maximal is equivalent to assuming that $a \in \fr{A} \smallsetminus \fr{P}$ and $\Bar{a}$ is not scalar. 

Now let $\bo{u} = [u_0 : \dotso : u_{n-1}]$ represent a point in $X^k$ for some $k$. Then this point is fixed by $g$ if and only if 
\begin{equation} \label{eq:unramified_step1}
    \alpha \bo{u} + \varpi^l a \bo{u} = \gamma \bo{u} \pmod{\p_E^k} 
\end{equation}
for some $\gamma \in \units{\roi_E}$. Clearly if $k \le l$, then every point in $X^k$ yields a solution to this, taking $\gamma = \alpha \pmod{\p_E^k}$. Assume now that $k > l$. Since $a \in \fr{A} \smallsetminus \fr{P}$, this system of equations can have a solution only if we choose $\gamma = \alpha \pmod{\p_E^l}$, but $\gamma \neq \alpha \pmod{\p_E^{l+1}}$. Assuming this and letting $\gamma' = \varpi^{-l} (\gamma - \alpha) \in \units{\roi_E}$, \eqref{eq:unramified_step1} becomes 
\begin{equation} \label{eq:unramified_step2}
    a \bo{u} = \gamma' \bo{u} \pmod{\p_E^{k-l}} . 
\end{equation}
If $a \notin \GL_n(\roi_F)$, this has no solution, and hence $g$ has no fixed points in $X^k$ for any $k > l$. But if $a \in \GL_n(\roi_F)$, this says that $\bo{u}$ represents a fixed point of $g$ in $X^k$ if and only if $\bo{u}$ represents a fixed point of $a$ in $X^{k-l}$. Since $\Bar{a}$ is not scalar, the lemma implies that there are at most $n$ such points in $X^{k-l}$. Thus there are at most $n (\card{\resfld_E})^{ln}$ fixed points of $g$ in $X^k$. 

\subsection{The ramified case}

Now let $\fr{A}$ be the algebra of matrices in $M_n(\roi_F)$ that are upper triangular modulo $\p$, so that $\units{\fr{A}}$ is the Iwahori subgroup of $K$. As in the previous section of the proof, we temporarily assume all of the notation from the ramified case of section~\ref{sec:typesintro}. In particular, $E = F[\beta]$ is now a totally ramified extension of $F$ of degree $n$, where $\beta \in M_n(F)$ is minimal over $F$. Once again, let $k = \floor{\frac{m+1}{2}}$, so that $J = \units{\roi_E}(1 + \fr{P}^k)$. Let $\rho = \Ind_J^{\units{\fr{A}}}(\lambda)$, and let 
\[ \tau = \Ind_{\units{\fr{A}}}^K(\rho) = \Ind_J^K(\lambda) . \]
We will use the same strategy here as in the previous section of the proof, except that we will deal primarily with the induction to $\units{\fr{A}}$, which is now a proper subgroup of $K$. 

Let $\varpi$ be a uniformizer of $E$, and define $X \subset \PP^{n-1}(\roi_E)$ to be the set of all points with homogeneous coordinates 
\[ [ u_0 : u_1 \varpi : \dotso : u_{n-1} \varpi^{n-1} ], \quad u_i \in \units{\roi_E}. \]
(Note that this is equivalent to saying the coordinates form an $\roi_F$-basis of $\roi_E$, with strictly increasing $E$-valuations.) Again it is easy to see that $\units{\fr{A}}$ acts transitively on $X$, and that $\units{\roi_E}$ is the stabilizer of some point $x \in X$. A straightforward computation shows that in this case, the normal subgroup $1 + \fr{P}^k$ induces the equivalence relation of congruence modulo $\p_E^k$ on the units $u_i$ appearing in the coordinates of points in $X$: 
\begin{gather*}
    [ u_0 : u_1 \varpi : \dotso : u_{n-1} \varpi^{n-1} ] \sim [ u_0' : u_1' \varpi : \dotso : u_{n-1}' \varpi^{n-1} ] \\
    \text{if and only if} \\
    u_i = u_i' \pmod{\p_E^k} \text{ for each } i , 
\end{gather*}
or in other words, congruence modulo $\p_E^{k+i}$ on the $i$th coordinate, for each $i$. If we once again let $X_k$ denote the quotient of $X$ under this equivalence, and let $x_k$ denote the class of $x$, then $aJ \mapsto a \cdot x_k$ again defines an $\units{\fr{A}}$-equivariant bijection 
\[ \units{\fr{A}} / J \to X_k . \]
The same comments apply as before: the actual bijection given above will be different for subgroups $J$ coming from different types, but the action of $\units{\fr{A}}$ on the set $X_k$ will be the same regardless; and since the dimension of $\lambda$ is bounded by a fixed value, we may now forget all about the specific type, and deal only with counting fixed points of $g$ in the sets $X_k$, for all $k > 0$. Also as before, we have a projective system 
\[ X_1 \gets X_2 \gets \dotsb \]
of $\units{\fr{A}}$-sets, with $X = \varprojlim X_k$, and any fixed point of $g$ in $X_k$ must map to a fixed point in $X_{k'}$ for $k' < k$. 

We may now dispense easily with the last two claims of the theorem. If $g \in K$ is not $K$-conjugate to any element of $\units{\fr{A}}$, then it clearly cannot be conjugate to any element of $J$ for \emph{any} of the groups $J$ that we are considering. Thus for such an element $g$, the Frobenius formula implies that $\Tr \tau(g) = 0$ for all ramified types $\tau$ of $K$. On the other hand, if $g$ is conjugate to an element of $\units{\fr{A}}$, then for the purpose of computing traces, we may assume $g \in \units{\fr{A}}$, and thus $\Bar{g} \in \GL_n(\resfld_F)$ is upper-triangular. Clearly such a $g$ can have a fixed point in $X_1$ only if all of the diagonal entries of $\Bar{g}$ (its eigenvalues in $\units{\resfld_F}$) are the same, in which case every point of $X_1$ is a fixed point. Thus, if $\Bar{g}$ has at least two distinct eigenvalues, it has no fixed point in $X_1$, and thus has no fixed point in $X_k$ for all $k > 0$. This proves that $\Tr \rho(g) = 0$ for all types in this case. But since the condition on $g$ here depends only on its conjugacy class in $K$, applying the Frobenius formula to $\tau = \Ind_{\units{\fr{A}}}^K(\rho)$ yields $\Tr \tau(g) = 0$ as well. Note that in this case, the trace bound of $n$ in the second claim of the theorem does not arise at all. 

We now deal with the first claim of the theorem. The only remaining possibility for $g$ is that its reduction modulo $\p$ is upper-triangular with one eigenvalue of multiplicity $n$. So, just as in the unramified case, we choose $\alpha \in \units{\roi_F}$ and $h \in \fr{P}^l$ such that $g = \alpha + h$, and such that $l$ is maximal with respect to this decomposition. In order to describe $h$ more explicitly, let $l = nt + r$ with $0 \le r < n$, and for $0 \le i,j < n$ define 
\[ \varepsilon_{ij} = \floor{\frac{n - 1 + r + i - j}{n}} = \begin{cases}
    0 & \text{if } r \le j - i          \\
    1 & \text{if } r - n \le j - i < r  \\
    2 & \text{if } j - i < r - n
\end{cases} . \]
Letting $h_{ij}$ denote the $i,j$ coefficient of the matrix $h$, we may describe $h$ explicitly as follows: \begin{inparaenum}[(i)] \item $\val_F(h_{ij}) \ge t + \varepsilon_{ij}$ for all $i,j$, \item this inequality is an equality for some $i,j$ satisfying $j - i \equiv r \pmod{n}$, and \item if $r = 0$, then the diagonal elements $h_{ii}$ cannot all be the same modulo $\p^{t+1}$. \end{inparaenum} The first of these requirements is precisely the fact that $h \in \fr{P}^l$; the last two are due to the maximality of our choice of $l$. Note that $0 \le n \varepsilon_{ij} + j - i - r < n$ for all $i,j$. So if we let $e_{ij} = n \varepsilon_{ij} + j - i - r$, then $e_{ij}$ is simply the reduction of $j - i - r$ modulo $n$. From this, we get $\val_E(h_{ij}) \ge l + i - j + e_{ij}$, with equality for some $i,j$ such that $e_{ij} = 0$. 

Now let $[u_0 : \varpi u_1 : \dotso : \varpi^{n-1} u_{n-1}]$ represent a point in $X^k$ for some $k$. Then this point is fixed by $g$ if and only if 
\begin{equation} \label{eq:ramified_step1}
    \alpha \varpi^i u_i + \sum_{j=0}^{n-1} h_{ij} \varpi^j u_j = \gamma \varpi^i u_i \pmod{\p_E^{k+i}}, \qquad 0 \le i < n , 
\end{equation}
for some $\gamma \in \units{\roi_E}$. Define a new matrix $a \in M_n(\roi_E)$ by $a_{ij} = \varpi^{j - i - l} h_{ij}$. Then $\val_E(a_{ij}) \ge e_{ij}$, and $a_{ij} \in \units{\roi_E}$ for some $i,j$. The system of equations \eqref{eq:ramified_step1} is now equivalent to 
\begin{equation} \label{eq:ramified_step2}
    \varpi^l \sum_{j=0}^{n-1} a_{ij} u_j = (\gamma - \alpha) u_i \pmod{\p_E^k}, \qquad 0 \le i < n . 
\end{equation}
Now it is immediate that if $k \le l$, then any choice of $u_0, \dotsc, u_{n-1}$ yields a solution to this system (taking $\gamma = \alpha \pmod{\p_E^k}$), and hence every point in $X^k$ is a fixed point of $g$. Assume now that $k > l$. Since $a_{ij} \in \units{\roi_E}$ for some $i,j$ satisfying $j - i \equiv r \pmod{n}$, this system has a solution only if we choose $\gamma = \alpha \pmod{\p_E^l}$ and $\gamma \neq \alpha \pmod{\p_E^{l+1}}$, and thus only if $a_{ij} \in \units{\roi_E}$ for \emph{all} such pairs $i,j$. If the latter condition is is false, then $g$ has no fixed points in $X^k$ and we are finished, so we assume it is true. Let $\bo{u}$ denote the column vector in $\roi_E^n$ having $u_0, \dotsc, u_{n-1}$ as its components. Also, as in the unramified case, let $\gamma' = \varpi^{-l} (\gamma - \alpha)$. Then \eqref{eq:ramified_step2} is equivalent to 
\begin{equation} \label{eq:ramified_step3}
    a \bo{u} = \gamma' \bo{u} \pmod{\p_E^{k-l}} . 
\end{equation}
We now have two cases to consider. First, if $r = 0$, then $\Bar{a} \in \GL_n(\resfld_E)$ is diagonal, but not scalar (since $l$ was chosen to be maximal). Thus, in this case, there can be no fixed points in $X^{l+1}$, and hence none in $X^k$ for all $k > l$. On the other hand, if $r > 0$, then it is easy to see (since, for example, the matrix $a$ has exactly one unit in each row and column) that there will be exactly one solution to \eqref{eq:ramified_step3} for every root of the characteristic polynomial of $a$ in $\rquot{\roi_E}{\fr{p}_E^{k-l}}$. Since the number of such roots is bounded as $k \to \infty$, the number of solutions to \eqref{eq:ramified_step3} is bounded. The fixed points of $g$ in $X^k$ are given by the lifts of these solutions from $\rquot{\roi_E}{\p_E^{k-l}}$ to $\rquot{\roi_E}{\p_E^k}$, and thus are bounded as well. This completes the proof in the ramified case. 

Note that in the last case above, the characteristic polynomial of $\Bar{a}$ is just $x^n - \Bar{\eta}$, where $\eta = \prod a_{ij}$, the product being taken over all $i,j$ such that $j - i \equiv r \pmod{n}$. So we may summarize all of the conditions above as follows: If $r = 0$ or if $\Bar{\eta}$ is not an $n$th power in $\units{\resfld_E}$, there will be no fixed points in $X^k$ for all $k > l$. On the other hand, if $r > 0$ and $\Bar{\eta}$ is an $n$th power in $\units{\resfld_E}$, then Hensel's lemma again yields (except possibly when $n$ is equal to the residual characteristic) that there are at most $n$ roots in $\rquot{\roi_E}{\p_E^k}$ for all $k$. Thus, in this case, there are at most $n (\card{\resfld_E})^{ln}$ fixed points of $g$ in $X^k$ for all $k$, just as in the unramified case. Therefore, in this case, $\Tr(\tau(g))$ is bounded by that number times the maximum dimension of $\lambda$ (which is $q^{\frac{n^2-n}{2}}$) times $[K:\units{\fr{A}}] = \prod_{k=1}^{n-1} (1+q+\dotsb+q^k)$. 
\end{proof}

\section{The archimedean places}
\label{sec:archimedean}

In order to deal with the components of our global types and automorphic representations at the infinite places, we now briefly detour to review the relevant representation theory and set up the necessary notation. Recall that we have fixed, for each infinite place $v$ of $F$, an isomorphism $\iota_v: G(F_v) \to \ro{U}(n)$. Thus if $\tau$ is a global type, $\tau_v = \tau'_v \circ \iota_v$ for some irreducible representation $\tau'_v$ of the compact Lie group $\ro{U}(n)$. We now let $\fr{g} = \fr{u}(n)$ be the (real) Lie algebra of $\ro{U}(n)$, which is the algebra of skew-Hermitian matrices in $M_n(\CC)$. Let $\fr{g}_{\RR} = i \fr{g}$, the algebra of Hermitian matrices in $M_n(\CC)$, and let 
\[ \fr{g}^{\CC} = \fr{g} \otimes_{\RR} \CC = \fr{g}_{\RR} \oplus i \fr{g}_{\RR} = \fr{gl}(n, \CC) . \]
Let $T$ be the maximal torus in $\ro{U}(n)$ consisting of diagonal matrices, and let $\fr{h}$ be the corresponding Cartan subalgebra of $\fr{u}(n)$: 
\[ \fr{h} = \left\{ \begin{pmatrix} ia_1 & & \\ & \ddots & \\ & & ia_n \end{pmatrix} \suchthat a_i \in \RR \right\} . \]
As usual, let $\fr{h}_{\RR} = i \fr{h}$, and let $\fr{h}^{\CC} = \fr{h} \otimes_{\RR} \CC = \fr{h}_{\RR} \oplus i \fr{h}_{\RR}$. Finally, let $\fr{h}_{\RR}^*$ and $(\fr{h}^{\CC})^*$ denote the dual spaces of $\fr{h}_{\RR}$ and $\fr{h}^{\CC}$, respectively. 

To simplify things, we will work relative to a fixed basis: Let $e_i$ be the $n \times n$ matrix with a $1$ in the $i,i$ position and zeros elsewhere, so that $\left\{ e_i \suchthat 1 \le i \le n \right\}$ is a $\CC$-basis of $\fr{h}^{\CC}$. We also let $e_i^*$ denote the functionals of the corresponding dual basis, so $e_i^*(e_j) = \delta_{ij}$ for each $i,j$. Thus any linear functional $\lambda \in (\fr{h}^{\CC})^*$ can be written uniquely as $\lambda = \sum_{i=1}^n a_i e_i^*$ ($a_i \in \CC$), and such a $\lambda$ will be analytically integral if and only if $a_i \in \ZZ$ for all $i$. 
In this setting, the set of roots $\Delta$ of $\ro{U}(n)$ is 
\[ \Delta = \left\{ \lambda_{ij} = e_i^* - e_j^* \suchthat 1 \le i \neq j \le n \right\} . \]
With respect to our chosen basis of $(\fr{h}^{\CC})^*$, the sets of positive and simple roots are, respectively, 
\begin{align*}
    \Delta^+ &= \left\{ \lambda_{ij} \suchthat 1 \le i < j \le n \right\} \text{ and }          \\
    \Pi      &= \left\{ \lambda_{i,i+1} = e_i^* - e_{i+1}^* \suchthat 1 \le i < n \right\} . 
\end{align*}
With these choices, we find that a weight $\lambda = \sum a_i e_i^*$ is dominant if and only if $a_i \ge a_j$ for all $i < j$. By the theorem of the highest weight, the irreducible representations of $\ro{U}(n)$ are in one-to-one correspondence with the set $\Lambda$ of dominant, analytically integral functionals on $\fr{h}^{\CC}$: 
\[ \Lambda = \left\{ \lambda = \sum_{i=1}^n a_i e_i^* \suchthat a_i \in \ZZ \quad \forall i \text{, and } a_1 \ge \dotsb \ge a_n \right\} . \]
For $\lambda \in \Lambda$, we will denote by $\xi_{\lambda}$ the corresponding representation of $\ro{U}(n)$. 

Following standard practice, we define a bilinear form $B_0: \fr{g} \times \fr{g} \to \RR$ by 
\[ B_0(X, Y) = \Tr XY . \]
Note that for any $\lambda \in \fr{h}_{\RR}^*$, there is a unique $H_{\lambda} \in H$ such that $\lambda(H) = B_0(H, H_{\lambda})$ for all $H \in \fr{h}_{\RR}$. We may now define an inner product on $\fr{h}_{\RR}^*$ by 
\[ \langle \lambda_1, \lambda_2 \rangle = B_0(H_{\lambda_1}, H_{\lambda_2}) . \]
Let $\delta$ be half the sum of the positive roots: 
\[ \delta = (\tfrac{n-1}{2})e_1^* + (\tfrac{n-3}{2})e_2^* + \dotsb + (\tfrac{3-n}{2})e_{n-1}^* + (\tfrac{1-n}{2})e_n^* . \]
The Weyl dimension formula now gives 
\begin{align*}
    \dim(\xi_{\lambda}) &= \prod_{\alpha \in \Delta^+} \frac{\langle \lambda + \delta, \alpha \rangle}{\langle \delta, \alpha \rangle} \\
                        &= \prod_{1 \le i < j \le n} \frac{a_i - a_j + j-i}{j-i} \\
                        &= \frac{\prod\limits_{i < j} (a_i - a_j + j-i)}{\prod\limits_{k=1}^{n-1} k!}
\end{align*}
for any $\lambda = \sum a_i e_i^* \in \Lambda$. Note that the above expression is a polynomial of degree $\frac{n^2 - n}{2}$ in the $n$ variables $a_1, \dotsc, a_n$. We will refer to this polynomial as the Weyl polynomial for $\ro{U}(n)$. 

In order to prove our main global theorem, we will need a bound on the characters of the representations $\xi_{\lambda}$, in analogy with Theorem~\ref{th:tracebound}. The following proposition is adapted slightly from~\cite{CC:NumberFields}*{Prop. 1.9}, and the proof may be found there. 

\begin{proposition}[Chenevier-Clozel] \label{prop:ccbound}
Let $g \in \ro{U}(n)$, and assume $g$ is not central. There exists a polynomial in $n$ variables $P_g(X_1, \dotsc, X_n)$, of degree strictly less than that of the Weyl polynomial, such that for all $\lambda = \sum a_i e_i^* \in \Lambda$, 
\[ \abs{\Tr \xi_{\lambda}(g)} \le P_g(a_1, \dotsc, a_n) . \]
\end{proposition}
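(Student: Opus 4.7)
The plan is to use the Weyl character formula (the bialternant / Schur polynomial formula) and to track how its degeneration when $g$ has repeated eigenvalues exactly accounts for the drop in degree. Since $\Tr \xi_{\lambda}(g)$ depends only on the conjugacy class of $g$, we may assume $g$ is diagonal with eigenvalues $z_1, \dotsc, z_n \in S^1$. Group these into distinct values $w_1, \dotsc, w_r \in S^1$ with multiplicities $n_1, \dotsc, n_r$ (so $\sum_k n_k = n$). The hypothesis that $g$ is not central is exactly the statement that $r \ge 2$.

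The easy case is $r = n$ (all $z_i$ distinct), where the Weyl character formula
\[ \Tr \xi_{\lambda}(g) = \frac{\det\left(z_i^{a_j + n - j}\right)_{i,j}}{\det\left(z_i^{n - j}\right)_{i,j}} \]
directly applies: the denominator is a nonzero Vandermonde depending only on $g$, and the numerator is a determinant whose $n^2$ entries all have modulus $1$, hence is bounded by $n!$. Thus $|\Tr \xi_{\lambda}(g)|$ is bounded by a constant, which is trivially a polynomial in $a_1, \dotsc, a_n$ of degree $0 < \frac{n^2 - n}{2}$.

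In the general case, both the numerator and denominator of the bialternant formula vanish, and the remedy is to pass to the confluent version by repeated L'Hôpital-style row operations: within each block of $n_k$ coinciding eigenvalues, one successively subtracts rows, divides by differences $z_i - z_{i'}$, and takes limits as $z_i, z_{i'} \to w_k$. This replaces, in the $k$-th block, the $\ell$-th row ($0 \le \ell < n_k$) by the row of scaled derivatives of $z \mapsto z^{a_j + n - j}$ evaluated at $w_k$, whose $j$-th entry has the form $\binom{a_j + n - j}{\ell}\, \ell!\, w_k^{a_j + n - j - \ell}$. The resulting denominator is a confluent Vandermonde in the distinct points $w_1, \dotsc, w_r$ with multiplicities $n_1, \dotsc, n_r$; this is nonzero and depends only on $g$. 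The resulting numerator is a determinant whose $\ell$-th entry in the $k$-th block is a polynomial of degree $\ell$ in $a_j$ with unit-modulus coefficients (times combinatorial factors).

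Expanding this determinant, every term selects one entry per row; the total degree in $a_1, \dotsc, a_n$ is therefore bounded by
\[ \sum_{k=1}^{r} (0 + 1 + \dotsb + (n_k - 1)) = \sum_{k=1}^{r} \binom{n_k}{2}. \]
Bounding each entry in absolute value produces the desired polynomial $P_g(a_1, \dotsc, a_n)$, of this degree, with coefficients depending only on $g$. The elementary identity
\[ \binom{n}{2} - \sum_{k=1}^{r} \binom{n_k}{2} = \sum_{1 \le k < k' \le r} n_k n_{k'} \]
is strictly positive precisely when $r \ge 2$, so $\deg P_g < \binom{n}{2} = \frac{n^2 - n}{2}$, matching the degree of the Weyl polynomial. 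The only real obstacle is the bookkeeping of the L'Hôpital limiting procedure — in particular, verifying that the denominator is indeed nonzero (the confluent Vandermonde) and that the degree in each row of the numerator is exactly $\ell$ — but this is a routine, finite-dimensional computation rather than a conceptual hurdle.
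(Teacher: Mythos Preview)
Your argument is correct and is exactly the standard route to this bound: Weyl's bialternant formula, confluent Vandermonde in the presence of repeated eigenvalues, and the degree count $\sum_k \binom{n_k}{2} < \binom{n}{2}$ when $r \ge 2$. The paper itself does not give a proof of this proposition; it simply attributes the result to Chenevier--Clozel and refers the reader to \cite{CC:NumberFields}*{Prop.~1.9}. Your sketch is essentially the argument one finds there, so there is nothing to contrast at the level of strategy.

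One small point worth tightening: after bounding each entry of the confluent numerator in absolute value, what you actually obtain is a bound of the form $\lvert \Tr \xi_\lambda(g)\rvert \le Q(\lvert a_1\rvert, \dotsc, \lvert a_n\rvert)$ for a polynomial $Q$ with nonnegative coefficients, not literally a polynomial in the $a_j$ themselves (which can be negative). This is harmless for the application in the paper, where only the degree of growth relative to the Weyl polynomial matters, and one can either read ``polynomial'' in that sense or normalize by a central twist so that all $a_j \ge 0$; but it is worth saying explicitly rather than leaving implicit.
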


It will be convenient to abuse notation slightly and refer to the Weyl polynomial and the polynomial $P_g$ above as polynomials on $\fr{h}_{\RR}^*$, with the understanding that when $\lambda = \sum a_i e_i^*$, $P(\lambda)$ means $P(a_1, \dotsc, a_n)$. Note that the degree of such a polynomial is well-defined independently of our choice of basis for $\fr{h}_{\RR}^*$.

\section{Proof of main theorem}
\label{sec:maintheoremproof}

We now return to the global setting. In all that follows, our notation will be as in section~\ref{sec:globalintro}. In particular, $F$ is now a totally real number field, $G$ a unitary group of rank $n$ defined over $F$, and $K$ a certain maximal compact open subgroup of $G(\AA)$ on which our global types are defined. 

We first record a crucial lemma, which is an immediate consequence of Theorem~\ref{th:tracebound} and Proposition~\ref{prop:ccbound}. 

\begin{lemma} \label{lem:globalbound}
Let $x \in K \smallsetminus Z(\AA)$, and assume $x$ is semisimple. Then there exists a constant $C_x$, and for each infinite place $v$ of $F$ a polynomial $P_{x,v}$ on $\fr{h}_{\RR}^*$, such that for all global types $\tau$, 
\[ \abs{\Tr \tau(x)} \le C_x \cdot n^{\card{S(\tau)}} \cdot \prod_{v \divs \infty}P_{x,v}(\lambda_v(\tau)) . \]
Each of the polynomials $P_{x,v}$ has degree strictly less than that of the Weyl polynomial of $\ro{U}(n)$. 
\end{lemma}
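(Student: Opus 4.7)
The plan is to decompose $\abs{\Tr \tau(x)} = \prod_v \abs{\Tr \tau_v(x_v)}$ and bound each local factor separately, using Proposition~\ref{prop:ccbound} at the infinite places and Theorem~\ref{th:tracebound} at the finite places in $S(\tau)$. The factors at ``trivial'' places are immediately harmless: for every finite $v \notin S$ we have $\tau_v = 1$, and for every $v \in S$ outside $S(\tau)$ the component $\tau_v$ is a unitary character, contributing a factor of absolute value $1$ in either case. So the product effectively reduces to factors indexed by $v \in S(\tau)$ and by $v \mid \infty$.

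At each infinite $v$, I apply Proposition~\ref{prop:ccbound} to the non-central element $\iota_v(x_v) \in \ro{U}(n)$ to produce a polynomial $P_{x,v}$ on $\fr{h}_{\RR}^*$ of degree strictly less than that of the Weyl polynomial, with $\abs{\Tr \tau_v(x_v)} \le P_{x,v}(\lambda_v(\tau))$. At each $v \in S(\tau)$, I apply Theorem~\ref{th:tracebound} to the non-central element $x_v \in \GL_n(\roi_{F_v})$ to obtain a constant $C_{x_v}$ with $\abs{\Tr \tau_v(x_v)} \le C_{x_v}$, uniformly over all supercuspidal types on $K_v$. Non-centrality of $x_v$ at every place is a consequence of $x \in G(F) \cap K \smallsetminus Z(F)$, the case of interest for the global application, via the identity $G(F) \cap Z(\AA) = Z(F)$.

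The delicate point is producing the factor $n^{\card{S(\tau)}}$ rather than the crude product $\prod_{v \in S(\tau)} C_{x_v}$. For this I exploit the explicit form of the local constant in Theorem~\ref{th:tracebound}: one may take $C_{x_v} = n$ whenever the reduction $\Bar{x_v}$ has irreducible characteristic polynomial, and $C_{x_v} = 0$ whenever $\Bar{x_v}$ has at least two distinct eigenvalues. The only remaining possibility is that $\Bar{x_v}$ has characteristic polynomial of the form $(t - \alpha)^n$ while $x_v$ itself is not central. Since the global characteristic polynomial $p_x(t) \in F[t]$ is not itself of the form $(t - a)^n$---this would force $x = aI$ by semisimplicity, contradicting $x \notin Z(\AA)$---such a reduction can occur only at finitely many $v$, because it requires all roots of $p_x$ in $\Bar{F}$ to become congruent modulo $\p_v$. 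Absorbing the finite product of the resulting constants into $C_x$ and bounding each of the remaining factors by $n$ gives $\prod_{v \in S(\tau)} C_{x_v} \le C_x \cdot n^{\card{S(\tau)}}$, and multiplying by the infinite-place bounds then yields the stated inequality.

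The main obstacle is exactly this last finiteness argument: it is where the semisimplicity and global non-centrality of $x$ enter in an essential way, ruling out the possibility that the ``$C_{x_v} > n$'' exceptions accumulate across supercuspidal places and producing the uniform $n^{\card{S(\tau)}}$ factor required by the statement.
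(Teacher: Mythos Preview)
Your argument is essentially the paper's: factor $\Tr\tau(x)$ over places, bound the archimedean factors via Proposition~\ref{prop:ccbound}, bound the factors at $v\in S(\tau)$ via Theorem~\ref{th:tracebound}, and observe that the reduction $\Bar{x_v}$ can fail to have two distinct eigenvalues at only finitely many $v$, so that all but finitely many local constants may be taken equal to $n$ (or $0$) and the remainder absorbed into $C_x$. Two small points of precision are worth noting. First, the identity $G(F)\cap Z(\AA)=Z(F)$ only tells you $x_v$ is non-central at \emph{some} place; what you need, and what the paper's ``two distinct eigenvalues'' sentence encodes, is that a semisimple non-central rational element has two globally distinct eigenvalues which remain distinct in every completion $F_v$ (equivalently $G(F)\cap Z(F_v)=Z(F)$ for each $v$), so that $x_v$ is non-central everywhere and both Theorem~\ref{th:tracebound} and Proposition~\ref{prop:ccbound} are applicable. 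Second, in the actual application $x$ lies in $K_g=G(F)^g\cap K$ rather than in $G(F)\cap K$, but since conjugation by $g\in G(\AA)$ does not change the characteristic polynomial of each local component, your global-characteristic-polynomial finiteness argument transfers unchanged.
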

\begin{proof}
Since $x$ is semisimple and not in the center, it must have at least two distinct eigenvalues. Thus there are at most finitely many places $v \in S$ at which the reduction of $x_v$ modulo $\p_{F_v}$ has a single eigenvalue of multiplicity $n$. For each of these places, Theorem~\ref{th:tracebound} gives us a constant $C_{x_v}$ such that 
\[ \abs{\Tr \tau_v(x_v)} \le C_{x_v} \]
for every supercuspidal $K$-type $\tau_v$, where $K = \GL_n(\roi_{F_v})$. Let $C_x$ be the product of these constants $C_{x_v}$. At all other finite places $v \in S$, we have by the same theorem 
\[ \abs{\Tr \tau_v(x_v)} \le n \]
for every supercuspidal $K$-type $\tau_v$. For each infinite place, let $P_{x,v}$ be the polynomial given by Proposition~\ref{prop:ccbound} applied to $\iota_v(x_v) \in \ro{U}(n)$. Then for any global type $\tau = \bigotimes_v \tau_v$, since $\tau_v$ is $1$-dimensional outside of $\infty$ and $S(\tau)$, we have 
\[ \abs{\Tr \tau(x)} = \prod_{v \in S(\tau)} \abs{\Tr \tau_v(x_v)} \cdot \prod_{v \divs \infty} \abs{\Tr \tau_v(x_v)} , \]
and the result follows. 
\end{proof}

\begin{proof}[Proof of Theorem~\ref{th:globaltheorem}]
We now proceed to compute $m(\tau)$. As mentioned in section~\ref{sec:globalintro}, $\ca{A}(\lquot{G(F)}{G(\AA)})$ is simply the space of smooth functions on $G(\AA)$ that are invariant under left translation by elements of $G(F)$. Note that this is merely the induced representation: 
\begin{equation} \label{eq:autspace}
    \ca{A}(\lquot{G(F)}{G(\AA)}) = \Ind_{G(F)}^{G(\AA)} (1) . 
\end{equation}
To deal with the restriction of this representation to $K$, we apply Mackey's formula. Let $R$ be a set of double coset representatives for 
\[ \biquot{G(F)}{G(\AA)}{K} . \]
Note that $R$ is finite, for example by~\cite{Shimura}*{8.7}. To simplify notation, we let $K_g = G(F)^g \inter K$ for $g \in R$. Since $K$ is a compact subgroup of $G(\AA)$ and $G(F)^g$ a discrete subgroup, $K_g$ is finite. Applying Mackey's formula to \eqref{eq:autspace} now yields 
\[ \Res^{G(\AA)}_K \Ind_{G(F)}^{G(\AA)} (1) = \bigoplus_{g \in R} \Ind_{K_g}^K \Res^{G(F)^g}_{K_g} (1) . \]
Now if $\tau$ is any global type for $G$, then $m(\tau)$ is merely the multiplicity of $\tau$ in the representation above. So we have (relaxing our notation somewhat, as the restriction functors are implied) 
\begin{align*}
    m(\tau) &= \left\langle \tau, \bigoplus_{g \in R} \Ind_{K_g}^K (1) \right\rangle_K  \\
            &= \sum_{g \in R} \left\langle \tau, \Ind_{K_g}^K (1) \right\rangle_K       \\
            &= \sum_{g \in R} \left\langle \tau, 1 \right\rangle_{K_g}                  \\
            &= \sum_{g \in R} \frac{1}{\card{K_g}} \sum_{x \in K_g} \Tr \tau(x) 
\end{align*}
for all global types $\tau$. Note that $K_g \inter Z(\AA) = G(F) \inter Z_0 = \roi_E^1 = \mu_E$, and by definition a global type $\tau$ is assumed to be trivial on this subgroup. Thus in the last sum above, the terms for which $x$ is central all satisfy $\Tr \tau(x) = \dim(\tau)$. Letting $C_1 = \card{\mu_E} \cdot \left(\sum_{g \in R} \frac{1}{\card{K_g}} \right)$, we have, for all global types $\tau$, 
\[ m(\tau) = C_1 \dim(\tau) + \sum_{g \in R} \frac{1}{\card{K_g}} \sum_{\substack{x \in K_g \\ x \notin Z(\AA)}} \Tr \tau(x) . \]
Now let $g \in R$, and let $x \in K_g \smallsetminus Z(\AA)$. Then $x$ is of finite order (as it belongs to the finite group $K_g$) and thus is semisimple. Hence we can apply the lemma to $x$, to get a constant $C_x$ and polynomials $P_{x,v}$ for $v \divs \infty$, such that 
\[ \abs{\Tr \tau(x)} \le C_x \cdot n^{\card{S(\tau)}} \cdot \prod_{v \divs \infty}P_{x,v}(\lambda_v(\tau)) \]
for all global types $\tau$. As there are only finitely many such $x$ to consider, we may sum the constants $C_x$ and the polynomials $P_{x,v}$, and the result follows. 
\end{proof}

\bibliographystyle{plain}

\begin{thebibliography}{10}

\bibitem{BM:Modular}
Christophe Breuil and Ariane M{\'e}zard.
\newblock Multiplicit\'es modulaires et repr\'esentations de {${\rm GL}\sb
  2({\bf Z}\sb p)$} et de {${\rm Gal}(\overline{\bf Q}\sb p/{\bf Q}\sb p)$} en
  {$l=p$}.
\newblock {\em Duke Math. J.}, 115(2):205--310, 2002.
\newblock With an appendix by Guy Henniart.

\bibitem{BH:Lifting2}
Colin~J. Bushnell and Guy Henniart.
\newblock Local tame lifting for {${\rm GL}(n)$}. {II}. {W}ildly ramified
  supercuspidals.
\newblock {\em Ast\'erisque}, (254):vi+105, 1999.

\bibitem{BH:Essentially1}
Colin~J. Bushnell and Guy Henniart.
\newblock The essentially tame local {L}anglands correspondence. {I}.
\newblock {\em J. Amer. Math. Soc.}, 18(3):685--710 (electronic), 2005.

\bibitem{BK:Orange}
Colin~J. Bushnell and Philip~C. Kutzko.
\newblock {\em The admissible dual of {${\rm GL}(N)$} via compact open
  subgroups}, volume 129 of {\em Annals of Mathematics Studies}.
\newblock Princeton University Press, Princeton, NJ, 1993.

\bibitem{Carayol:Cuspidals}
H.~Carayol.
\newblock Repr\'esentations cuspidales du groupe lin\'eaire.
\newblock {\em Ann. Sci. \'Ecole Norm. Sup. (4)}, 17(2):191--225, 1984.

\bibitem{CC:NumberFields}
G.~Chenevier and L.~Clozel.
\newblock Corps de nombres peu ramifi\'es et formes automorphes autoduales.
\newblock {\em J. Amer. Math. Soc.}, 22(2):467--519, 2009.

\bibitem{Glauberman}
George Glauberman.
\newblock Correspondences of characters for relatively prime operator groups.
\newblock {\em Canad. J. Math.}, 20:1465--1488, 1968.

\bibitem{Green:Characters}
J.~A. Green.
\newblock The characters of the finite general linear groups.
\newblock {\em Trans. Amer. Math. Soc.}, 80:402--447, 1955.

\bibitem{Paskunas}
Vytautas Paskunas.
\newblock Unicity of types for supercuspidal representations of {${\rm GL}\sb
  N$}.
\newblock {\em Proc. London Math. Soc. (3)}, 91(3):623--654, 2005.

\bibitem{Shimura}
Goro Shimura.
\newblock {\em Euler products and {E}isenstein series}, volume~93 of {\em CBMS
  Regional Conference Series in Mathematics}.
\newblock Published for the Conference Board of the Mathematical Sciences,
  Washington, DC, 1997.

\bibitem{Shin}
Sug~Woo Shin.
\newblock Plancherel density theorem for automorphic representations.
\newblock To appear in \textit{Israel J. Math.}, preprint available at
  \texttt{http://www.math.uchicago.edu/\textasciitilde{}swshin/Plancherel.pdf}.

\bibitem{Stevens:Exhaustion}
Shaun Stevens.
\newblock The supercuspidal representations of {$p$}-adic classical groups.
\newblock {\em Invent. Math.}, 172(2):289--352, 2008.

\bibitem{Weinstein}
Jared Weinstein.
\newblock Hilbert modular forms with prescribed ramification.
\newblock {\em Int. Math. Res. Not. IMRN}, (8):1388--1420, 2009.

\end{thebibliography}


\end{document}